\numberwithin{equation}{section}
\newtheorem{theorem}{Theorem}%[section]
\newtheorem{proposition}[theorem]{Proposition}
\newtheorem{example}[theorem]{Example}
\newtheorem{remark}[theorem]{Remark}
\newcommand{\mbf}[1]{\mathbf{#1}}
\newcommand{\wtilde}[1]{\widetilde{#1}}
\DeclarePairedDelimiter{\norm}{\lVert}{\rVert}
\DeclarePairedDelimiter{\abs}{\lvert}{\rvert}
\DeclarePairedDelimiter{\paren}{\lparen}{\rparen}
\DeclareMathOperator*{\diag}{\mathrm{diag}}
\DeclareMathOperator*{\range}{\mathrm{range}}
\DeclareMathOperator*{\argmin}{\mathrm{argmin}}
\DeclareMathOperator*{\argmax}{\mathrm{argmax}}
\def\mbbr{\mathbb R}
\def\mbf{\mathbf}
\def\rmf{{\rm F}}
\def\nn{\nonumber}
\newcommand{\beq}{\begin{equation}}\newcommand{\eeq}{\end{equation}}
\newcommand{\bit}{\begin{itemize}}\newcommand{\eit}{\end{itemize}}
\newcommand{\bem}{\begin{bmatrix}}\newcommand{\eem}{\end{bmatrix}}
\title{GP-CMRH: An inner product free iterative method for block two-by-two nonsymmetric linear systems}
\author{Kui Du\thanks{School of Mathematical Sciences, Xiamen University, Xiamen 361005, China (kuidu@xmu.edu.cn).},\quad Jia-Jun Fan\thanks{School of Mathematical Sciences, Xiamen University, Xiamen 361005, China ({jiajunfan@stu.xmu.edu.cn}).}}
\date{}
\begin{document}
	\maketitle
	
	\begin{abstract}
		We propose an inner product free iterative method called GP-CMRH for solving block two-by-two nonsymmetric linear systems. 
		GP-CMRH relies on a new simultaneous Hessenberg process that reduces two rectangular matrices to upper Hessenberg form simultaneously, without employing inner products. 
		Compared with GPMR [SIAM J. Matrix Anal. Appl., 44 (2023), pp. 293--311], GP-CMRH requires less computational cost per iteration and may be more suitable for high performance computing and low or mixed precision arithmetic due to its inner product free property.
		Our numerical experiments demonstrate that GP-CMRH and GPMR exhibit comparable convergence behavior (with GP-CMRH requiring slightly more iterations), yet GP-CMRH consumes less computational time in most cases. GP-CMRH significantly outperforms GMRES and CMRH in terms of convergence rate and runtime efficiency. 

\vspace{2mm} 
{\bf Keywords}. block two-by-two nonsymmetric linear systems, inner product free, GP-CMRH, GPMR, GMRES, CMRH

\vspace{2mm}  
{\bf 2020 Mathematics Subject Classification}: 15A06, 65F10, 65F50		
	\end{abstract}
	
	\section{Introduction}
	
	We consider block two-by-two nonsymmetric linear systems of the form
	\begin{equation} \label{eq:gpsys}
		\begin{bmatrix}
			\mathbf{M} & \mathbf{A} \\
			\mathbf{B} & \mathbf{N}
		\end{bmatrix}
		\begin{bmatrix}
			\mathbf{x} \\ \mathbf{y}
		\end{bmatrix} = 
		\begin{bmatrix}
			\mathbf{b} \\ \mathbf{c}
		\end{bmatrix},
	\end{equation}
	where $ \mathbf{M} \in \mathbb{R}^{m \times m} $, $ \mathbf{N} \in \mathbb{R}^{n \times n} $, $ \mathbf{A} \in \mathbb{R}^{m \times n} $, $ \mathbf{B} \in \mathbb{R}^{n \times m} $, $ \mathbf{b} \in \mathbb{R}^m $, and $ \mathbf{c} \in \mathbb{R}^n $. System \eqref{eq:gpsys} is derived from a variety of applications in scientific computing, for example, computational fluid dynamics \cite{elman2014finite}, finite-element discretization of Navier--Stokes equations \cite{elman2002preconditioners}, optimization problems \cite{friedlander2012primal}, and problems obtained by graph partitioning tools such as METIS \cite{karypis1998fast}. 
		
Krylov subspace methods such as BiCG \cite{fletcher1976conjugate}, GMRES \cite{saad1986gmres}, QMR \cite{freund1991qmr}, and Bi-CGSTAB \cite{vorst1992bi} can be employed for solving \eqref{eq:gpsys}. In these methods, the coefficient matrix in \eqref{eq:gpsys} is treated as a whole, and the exploitation of its block structure is generally restricted to the design of preconditioners. 

In recent years, some solvers specifically tailored to the block two-by-two structure of \eqref{eq:gpsys} have been proposed. For symmetric quasi-definite linear systems ($ \mathbf{M} $ is symmetric positive definite, $ \mathbf{N} $ is symmetric negative definite, and $ \mathbf{B} = \mathbf{A}^\top $), Orban and Arioli \cite{orban2017iterative} introduced a family of methods based on the generalized Golub--Kahan bidiagonalization process \cite{arioli2013gener}, extending the methods (e.g., LSQR \cite{paige1982lsqr}, LSMR \cite{fong2011lsmr}, Criag's method \cite{craig1955$n$}) based on the standard Golub--Kahan bidiagonalization process \cite{golub1965calculating}. Based on the Saunders--Simon--Yip tridiagonalization process \cite{saunders1988two}, Montoison and Orban \cite{montoison2021tricg} proposed two methods called TriCG and TriMR for solving symmetric quasi-definite linear systems with nonzero $ \mathbf{b} $ and $ \mathbf{c} $. TriCG and TriMR are equivalent to the preconditioned block CG and block MINRES methods, respectively, but require less storage and costs per iteration. Recently, Du et al. \cite{du2025impro} proposed improved variants of TriCG and TriMR to avoid unlucky terminations. When $ \mathbf{M} $ is symmetric positive definite, $ \mathbf{N} = \mathbf{0} $, and $ \mathbf{B} = \mathbf{A}^\top $, Buttari et al. \cite{buttari2019tridiagonalization} proposed a method called USYMLQR, combining the methods USYMLQ and USYMQR \cite{saunders1988two}. When $ \mathbf{M} $ is invertible and $ \mathbf{N} = \mathbf{0} $, Estrin and Greif \cite{estrin2018spmr} proposed a family of saddle-point solvers called SPMR involving short-term recurrences. We refer to \cite{benzi2005numerical,rozloznik2018saddle} and \cite[Chapter 8]{bai2021matri} for more saddle-point system solvers.	
	
In this paper, we assume that $ \mathbf{M} $ and $ \mathbf{N} $ are invertible, and that the computational costs of $ \mathbf{M}^{-1} \mathbf{v} $ and $ \mathbf{N}^{-1} \mathbf{u} $ are inexpensive. By the block diagonal right-preconditioner $ \mathrm{blkdiag} \paren{\mathbf{M}, \mathbf{N}} $, system \eqref{eq:gpsys} is transformed to
	\[
		{
			\renewcommand{\arraystretch}{0.8}
			\begin{bmatrix}
				\mathbf{I} & \widetilde{\mathbf{A}} \\
				\widetilde{\mathbf{B}} & \mathbf{I}
			\end{bmatrix}
		}
		\begin{bmatrix}
			\widetilde{\mathbf{x}} \\ \widetilde{\mathbf{y}}
		\end{bmatrix} = 
		\begin{bmatrix}
			\mathbf{b} \\ \mathbf{c}
		\end{bmatrix}, \quad
		\widetilde{\mathbf{A}} = \mathbf{A} \mathbf{N}^{-1}, \quad
		\widetilde{\mathbf{B}} = \mathbf{B} \mathbf{M}^{-1}, \quad
		\mathbf{x} = \mathbf{M}^{-1} \widetilde{\mathbf{x}}, \quad
		\mathbf{y} = \mathbf{N}^{-1} \widetilde{\mathbf{y}}.
	\]
	The above system is a special case of the following system
	\begin{equation} \label{eq:sys}
		\begin{bmatrix}
			\lambda \mathbf{I} & \mathbf{A} \\
			\mathbf{B} & \mu \mathbf{I} 
		\end{bmatrix}
		\begin{bmatrix}
			\mathbf{x} \\ \mathbf{y}
		\end{bmatrix} =
		\begin{bmatrix}
			\mathbf{b} \\ \mathbf{c}
		\end{bmatrix},
	\end{equation}
	where $ \lambda \in \mathbb{R} $ and $ \mu \in \mathbb{R} $. When $ \lambda \ne 0 $, the Schur complement system of \eqref{eq:sys} is 
	\[
		(\mathbf{BA} - \lambda \mu \mathbf{I}) \mathbf{y} = \mathbf{Bb} - \lambda \mathbf{c},\quad \mathbf{x} = \lambda^{-1} (\mathbf{b} - \mathbf{Ay}),
	\]
	and when $\mu\neq 0$, the Schur complement system of \eqref{eq:sys} is
	\[
		(\mathbf{AB} - \lambda \mu \mathbf{I}) \mathbf{x} = \mathbf{Ac} - \mu \mathbf{b},\quad \mathbf{y} = \mu^{-1} (\mathbf{c} - \mathbf{Bx}).
	\] Although the above Schur complement systems have smaller size than \eqref{eq:sys}, they may have worse conditioning. We thus focus on iterative methods specifically tailored to the block two-by-two structure of \eqref{eq:sys}.

	Recently, based on the orthogonal Hessenberg reduction, Montoison and Orban \cite{montoison2023gpmr} proposed an iterative method called GPMR for solving \eqref{eq:sys}. GPMR is mathematically equivalent to the block GMRES method applied to 	\begin{equation} \label{eq:blksys}
		\begin{bmatrix}
			\lambda \mathbf{I} & \mathbf{A} \\
			\mathbf{B} & \mu \mathbf{I}
		\end{bmatrix}
		\begin{bmatrix}
			\mathbf{x}^b & \mathbf{x}^c \\
			\mathbf{y}^b & \mathbf{y}^c
		\end{bmatrix} =
		\begin{bmatrix}
			\mathbf{b} & \mathbf{0} \\
			\mathbf{0} & \mathbf{c}
		\end{bmatrix},\quad
		\begin{bmatrix}
			\mathbf{x} \\ \mathbf{y}
		\end{bmatrix} = 
		\begin{bmatrix}
			\mathbf{x}^b \\ \mathbf{y}^b
		\end{bmatrix} + 
		\begin{bmatrix}
			\mathbf{x}^c \\ \mathbf{y}^c
		\end{bmatrix},
	\end{equation} but its storage and computational cost are similar to those of GMRES. The construction of basis vectors in GPMR inherently relies on explicit inner product computations (actually, most commonly used Krylov subspace methods \cite{saad2003iterative} involve inner product computations). For large-scale problems, this reliance on inner products introduces two main key bottlenecks: 	 
	 (1) in parallel computing environments, inner products require global reduction operations and thus degrade parallel efficiency; and (2) in low or mixed precision arithmetic, the accumulation of rounding errors in inner products can lead to a loss of orthogonality of the basis vectors, slower convergence rates, and ultimately, stagnation of iterative methods.
		
	The changing minimal residual Hessenberg method (CMRH) \cite{sadok1999cmrh,heyouni2008new,sadok2012new} is an inner product free method for solving general square linear systems, exhibiting convergence behavior similar to that of GMRES. There exist numerous studies on variants of CMRH in the literature; see, e.g., \cite{heyouni2001globa, duminil2013paral,zhang2014flexi,zhang2014flex,addam2017block,gu2018resta,amini2018block,amini2018weigh,gu2020effic,abdaoui2020simpl,gu2022globa}. Recently, Brown et al. \cite{brown2025h} introduced the H-CMRH method for solving large-scale inverse problem. Shortly after, Brown et al. \cite{brown2024inner} introduced a new Hessenberg process which reduces $ \mathbf{A} $ and its transpose $ \mathbf{A}^\top $ to upper Hessenberg form simultaneously, and proposed an inner product free method called LSLU for solving large-scale inverse problems. The relationship between LSLU and LSQR \cite{paige1982lsqr} parallels that of CMRH and GMRES, in terms of their algorithmic structures and convergence behaviors. Subsequently, Sabat\'e  Landman et al. \cite{sabatelandman2025randomized} introduced the sketched variants of CMRH and LSLU (called sCMRH and sLSLU) to obtain approximate solutions closer to those of GMRES and LSQR than their original counterparts.
	
	To the best of our knowledge, no inner product free iterative methods have been proposed in the literature that are specifically tailored to the block two-by-two structure of \eqref{eq:sys}. To address this gap, we introduce a simultaneous Hessenberg process that reduces two rectangular matrices $\mbf A$ and $\mbf B$ into upper Hessenberg form simultaneously. Leveraging this process, we propose the general partitioned changing minimal residual Hessenberg (GP-CMRH) method, an inner product free iterative solver for solving \eqref{eq:sys}.
		
	This paper is organized as follows. In the reminder of this section, we introduce some notation. In \cref{sec:orth_hessen}, we review the orthogonal Hessenberg reduction used in GPMR. The derivation of the simultaneous Hessenberg process is given in \cref{sec:inner_free_hessen}. In \cref{sec:gpcmrh}, we propose GP-CMRH and give the relations of residual norms between GP-CMRH and GPMR. Numerical experiments and concluding remarks are given in \cref{sec:exp,sec:conclusion}, respectively.
	
	\emph{Notation}. We use uppercase bold letters to denote matrices, and lowercase bold letters to denote column vectors unless otherwise specified. The identity of size $ k \times k $ is denoted by $ \mathbf{I}_k $. We use $ \mathbf{0} $ to denote zero vector or matrix. Let $ \mathbf{e}_i $ denote the $ i $th column of the identity matrix $\mbf I$ whose size is clear from the context. For a vector $ \mathbf{b} $, we use $b_i$, $ \mathbf{b}^\top $, and $ \norm{\mathbf{b}} $ to denote the $i$th element, the transpose, and the Euclidean norm of $ \mathbf{b} $, respectively. For a matrix $ \mathbf{A} $, we use $ \mathbf{A}^\top $, $ \mathbf{A}^\dagger $, $ \norm{\mathbf{A}} $, $ \norm{\mathbf{A}}_{\rmf} $, and $ \range(\mathbf{A}) $ to denote the transpose, the Moore--Penrose inverse, the spectral norm, the Frobenius norm, and the range of $ \mathbf{A} $, respectively. The condition number of a matrix $ \mathbf{A} $ is denoted by $ \kappa(\mathbf{A}):= \norm{\mathbf{A}} \norm{\mathbf{A}^\dagger} $.

	\section{The orthogonal Hessenberg reduction} \label{sec:orth_hessen}
	
	The orthogonal Hessenberg reduction used in GPMR is derived from the following theorem. 
	
	\begin{theorem}[{\cite[Theorem 2.1]{montoison2023gpmr}}] \label{thm:orth_hess}
	For matrices $ \mathbf{A} \in \mathbb{R}^{m \times n} $ and $ \mathbf{B} \in \mathbb{R}^{n \times m} $, there exist orthogonal matrices $ \mathbf{U} \in \mathbb{R}^{n \times n} $ and $ \mathbf{V} \in \mathbb{R}^{m \times m} $ such that
	\begin{equation} \label{eq:hess}
		\mathbf{V}^\top \mathbf{A} \mathbf{U} = \widetilde{\mathbf{H}}, \quad 
		\mathbf{U}^\top \mathbf{B} \mathbf{V} = \widetilde{\mathbf{F}},
	\end{equation}
	where $ \widetilde{\mathbf{H}} = (\wtilde{h}_{i,j}) $ and $ \widetilde{\mathbf{F}} = (\wtilde{f}_{i,j}) $ are both upper Hessenberg.
	\end{theorem}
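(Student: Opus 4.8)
The plan is to prove the theorem constructively, building the columns of $\mathbf{U}$ and $\mathbf{V}$ one index at a time; this is an abstract orthogonal version of the simultaneous Hessenberg process developed in \cref{sec:inner_free_hessen}, with orthogonal projection replacing oblique projection. Write $\mathbf{U} = [\mathbf{u}_1, \dots, \mathbf{u}_n]$ and $\mathbf{V} = [\mathbf{v}_1, \dots, \mathbf{v}_m]$. Since $\mathbf{U}$ and $\mathbf{V}$ are to be orthogonal, the two identities in \eqref{eq:hess}, read column by column, are equivalent to the span conditions
\begin{equation*}
	\mathbf{A}\mathbf{u}_j \in \spa\{\mathbf{v}_1,\dots,\mathbf{v}_{\min(j+1,\,m)}\}, \qquad
	\mathbf{B}\mathbf{v}_j \in \spa\{\mathbf{u}_1,\dots,\mathbf{u}_{\min(j+1,\,n)}\}
\end{equation*}
for all admissible $j$, together with orthonormality of $\{\mathbf{u}_i\}$ and of $\{\mathbf{v}_i\}$: the Hessenberg entries are then recovered as $\widetilde{h}_{i,j} = \mathbf{v}_i^\top \mathbf{A}\mathbf{u}_j$ and $\widetilde{f}_{i,j} = \mathbf{u}_i^\top \mathbf{B}\mathbf{v}_j$, and these vanish for $i > j+1$ precisely by the span conditions. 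So it suffices to produce two such orthonormal systems.

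I would build them by induction. Start from arbitrary unit vectors $\mathbf{v}_1 \in \mathbb{R}^m$ and $\mathbf{u}_1 \in \mathbb{R}^n$. Given orthonormal $\mathbf{v}_1,\dots,\mathbf{v}_j$ and $\mathbf{u}_1,\dots,\mathbf{u}_j$: if $j+1 \le m$, set $\mathbf{w} = \mathbf{A}\mathbf{u}_j - \sum_{i=1}^{j} (\mathbf{v}_i^\top \mathbf{A}\mathbf{u}_j)\,\mathbf{v}_i$ and take $\mathbf{v}_{j+1} = \mathbf{w}/\norm{\mathbf{w}}$ if $\mathbf{w} \ne \mathbf{0}$, while if $\mathbf{w} = \mathbf{0}$ let $\mathbf{v}_{j+1}$ be any unit vector orthogonal to $\spa\{\mathbf{v}_1,\dots,\mathbf{v}_j\}$, which exists because $j < m$; define $\mathbf{u}_{j+1}$ from $\mathbf{B}\mathbf{v}_j$ in the same way when $j+1 \le n$. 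In either branch the systems stay orthonormal, and $\mathbf{A}\mathbf{u}_j \in \spa\{\mathbf{v}_1,\dots,\mathbf{v}_{j+1}\}$ and $\mathbf{B}\mathbf{v}_j \in \spa\{\mathbf{u}_1,\dots,\mathbf{u}_{j+1}\}$ hold by construction. Once one of the two systems reaches its full dimension it spans the whole space, so it imposes no further Hessenberg constraint, and the remaining columns of the other system may be appended as arbitrary orthonormal completions. Iterating until the systems have $m$ and $n$ vectors, respectively, produces orthogonal $\mathbf{U}$ and $\mathbf{V}$ satisfying the span conditions, hence \eqref{eq:hess}.

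The argument is almost entirely bookkeeping; the points needing care are the two possible breakdowns (the projected vector $\mathbf{w}$ vanishing), handled by the free choice of completion vector without aborting the recursion, and the unequal dimensions $m$ and $n$, where one must note that beyond index $\min(m,n)$ one of the span conditions becomes vacuous. An equivalent, more structural derivation runs a band Arnoldi process on the augmented matrix $\left[\begin{smallmatrix} \mathbf{0} & \mathbf{A} \\ \mathbf{B} & \mathbf{0} \end{smallmatrix}\right]$ started from $[\mathbf{v}_1^\top, \mathbf{0}]^\top$ and $[\mathbf{0}, \mathbf{u}_1^\top]^\top$: the block Krylov subspace decouples into a ``top'' part spanned by the $\mathbf{v}_i$ and a ``bottom'' part spanned by the $\mathbf{u}_i$, and the two diagonal blocks of the associated band Hessenberg matrix are exactly $\widetilde{\mathbf{H}}$ and $\widetilde{\mathbf{F}}$. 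I would present the constructive proof as the main argument, since it is self-contained and foreshadows the process of \cref{sec:inner_free_hessen}.
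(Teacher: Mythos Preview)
The paper does not actually prove this theorem: it is quoted from \cite{montoison2023gpmr} and immediately followed by \Cref{alg:gphess}, which is exactly the recursion you describe (without the breakdown branch). Your constructive argument is correct and matches the algorithm the paper presents; in particular, your handling of the two delicate points---the breakdown case $\mathbf{w}=\mathbf{0}$ via an arbitrary orthonormal extension, and the unequal-dimension case $m\ne n$---fills in precisely what a formal proof of the cited result needs beyond the bare recursion. One small wording issue: when, say, $n<m$, the $\mathbf{V}$-recursion still needs one more ``active'' step to produce $\mathbf{v}_{n+1}$ from $\mathbf{A}\mathbf{u}_n$ before the remaining columns $\mathbf{v}_{n+2},\dots,\mathbf{v}_m$ may be chosen as an arbitrary orthonormal completion; your inductive step already does this (since $j+1\le m$ at $j=n$), but the sentence ``Once one of the two systems reaches its full dimension \dots\ the remaining columns of the other system may be appended as arbitrary orthonormal completions'' slightly undersells that last active step.
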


		 We summarize the orthogonal Hessenberg reduction process in \Cref{alg:gphess}.

		 	\begin{algorithm}[htbp]
%		\setstretch{1.15}
		\caption{Orthogonal Hessenberg reduction}
		\label{alg:gphess}
	
		\KwIn{$ \mathbf{A} \in \mathbb{R}^{m \times n} $, $ \mathbf{B} \in \mathbb{R}^{n \times m} $, and nonzero $ \mathbf{b} \in \mathbb{R}^m $ and $ \mathbf{c} \in \mathbb{R}^n $}
			
		$ \beta \mathbf{v}_1 := \mathbf{b} $, $ \gamma \mathbf{u}_1:= \mathbf{c} $ \tcc*[r]{$ \beta = \lVert \mathbf{b} \rVert $, $ \gamma = \lVert \mathbf{c} \rVert $}
		\For{$ k = 1, 2, \dots $}
		{
			$ \mathbf{v} = \mathbf{A} \mathbf{u}_k $, $ \mathbf{u} = \mathbf{B} \mathbf{v}_k $\;
			\For{$ i = 1, 2, \dots, k $}
			{
				$ \widetilde{h}_{i, k} = \mathbf{v}_i^\top \mathbf{v} $, $ \widetilde{f}_{i, k} = \mathbf{u}_i^\top \mathbf{u} $\;
				$ \mathbf{v} = \mathbf{v} - \widetilde{h}_{i, k} \mathbf{v}_i $\;
				$ \mathbf{u} = \mathbf{u} - \widetilde{f}_{i, k} \mathbf{u}_i $\;
			}
			$ \widetilde{h}_{k+1, k} \mathbf{v}_{k+1} := \mathbf{v} $, $ \widetilde{f}_{k+1, k} \mathbf{u}_{k+1} := \mathbf{u} $ \tcc*[r]{$ \widetilde{h}_{k+1,k} = \lVert \mathbf{v} \rVert $, $ \widetilde{f}_{k+1,k} = \lVert \mathbf{u} \rVert $}
		}
	\end{algorithm}
	
		 Let $ \widetilde{\mathbf{H}}_k $ and $ \widetilde{\mathbf{F}}_k $ be the leading upper-left $ k \times k $ submatrices of $ \widetilde{\mathbf{H}} $ and $ \widetilde{\mathbf{F}} $, respectively. Define 
	\begin{gather*}
		\mathbf{U}_k = 
		\begin{bmatrix}
			\mathbf{u}_1 & \mathbf{u}_2 & \cdots & \mathbf{u}_k
		\end{bmatrix}, \quad 
		\mathbf{V}_k = 
		\begin{bmatrix}
			\mathbf{v}_1 & \mathbf{v_2} & \cdots & \mathbf{v}_k
		\end{bmatrix},
		\shortintertext{and}
		\widetilde{\mathbf{H}}_{k+1, k} = 
		\begin{bmatrix}
			\widetilde{\mathbf{H}}_k \\ \wtilde{h}_{k+1, k} \mathbf{e}_k^\top
		\end{bmatrix}, \quad 
		\widetilde{\mathbf{F}}_{k+1, k} = 
		\begin{bmatrix}
			\widetilde{\mathbf{F}}_k \\ \wtilde{f}_{k+1, k} \mathbf{e}_k^\top
		\end{bmatrix}.
	\end{gather*}
	We have the relations
	\begin{equation} \label{eq:gpre}
		\begin{aligned}
			\mathbf{A} \mathbf{U}_k &= \mathbf{V}_{k+1} \widetilde{\mathbf{H}}_{k+1, k} = \mathbf{V}_k \widetilde{\mathbf{H}}_k + \wtilde{h}_{k+1, k} \mathbf{v}_{k+1} \mathbf{e}_k^\top, \\
			\mathbf{B} \mathbf{V}_k &= \mathbf{U}_{k+1} \widetilde{\mathbf{F}}_{k+1, k} = \mathbf{U}_k \widetilde{\mathbf{F}}_k + \wtilde{f}_{k+1, k} \mathbf{u}_{k+1} \mathbf{e}_k^\top, \\
			\mathbf{V}_k^\top \mathbf{V}_k &= \mathbf{U}_k^\top \mathbf{U}_k = \mathbf{I}_k,\ \widetilde{\mathbf{H}}_k = \mathbf{V}_k^\top \mathbf{A} \mathbf{U}_k,\ \widetilde{\mathbf{F}}_k = \mathbf{U}_k^\top \mathbf{B} \mathbf{V}_k.
		\end{aligned}
	\end{equation}
	From \eqref{eq:gpre} and the orthogonality of $ \mathbf{V}_k $ and $ \mathbf{U}_k $, we have the recursion
	\begin{equation*}
		\begin{aligned}
			\wtilde{h}_{k+1, k} \mathbf{v}_{k+1} = \mathbf{A} \mathbf{u}_k - \sum_{i=1}^{k} \wtilde{h}_{i, k} \mathbf{v}_i, \quad
			\wtilde{f}_{k+1, k} \mathbf{u}_{k+1} = \mathbf{B} \mathbf{v}_k - \sum_{i=1}^{k} \wtilde{f}_{i, k} \mathbf{u}_i,
		\end{aligned}
	\end{equation*}
	where $ \wtilde{h}_{i, k} = \mathbf{v}_i^\top \mathbf{A} \mathbf{u}_k $ and $ \wtilde{f}_{i, k} = \mathbf{u}_i^\top \mathbf{B} \mathbf{v}_k $, for $ i \le k $. 
	
We emphasize that the construction of $ \mathbf{V}_k $ and $ \mathbf{U}_k $ inherently 
	   relies on explicit inner product computations. In the next section, we propose an inner product free process called the simultaneous Hessenberg process in a similar fashion as the Hessenberg process \cite{sadok1999cmrh, heyouni2008new}.
	
	\section{The simultaneous Hessenberg process} \label{sec:inner_free_hessen}
	
	The simultaneous Hessenberg process is derived from the LU factorizations of $ \mathbf{U}_k $ and $ \mathbf{V}_k $. The relationship between the proposed process and \Cref{alg:gphess} is analogous to that between the Hessenberg process and the Arnoldi process \cite{arnoldi1951principle}. We subsequently present the process details. 
	
	Assume that we have the LU factorizations
	\begin{equation} \label{eq:lu}
		\mathbf{V}_k = \mathbf{D}_k \mathbf{R}_{k, V}, \quad
		\mathbf{U}_k = \mathbf{L}_k \mathbf{R}_{k, U}, \quad 
		\mathbf{R}_{k, V},\ \mathbf{R}_{k, U} \in \mathbb{R}^{k \times k},
	\end{equation}
	where $ \mathbf{R}_{k, V} $ and $ \mathbf{R}_{k, U} $ are upper triangular, and $\mathbf{D}_k $ and $ \mathbf{L}_k $ are unit lower trapezoidal (i.e., the first $k$ rows form a unit lower triangular submatrix). We have $ \range(\mathbf{D}_k) = \range(\mathbf{U}_k) $ and $ \range(\mathbf{L}_k) = \range(\mathbf{V}_k) $. Let	
	\[
		\mathbf{D}_k = 
		\begin{bmatrix}
			\mathbf{d}_1 & \mathbf{d}_2 & \cdots & \mathbf{d}_k 
		\end{bmatrix} \in \mathbb{R}^{m \times k}, \quad
		\mathbf{L}_k = 
		\begin{bmatrix}
			\bm{\ell}_1 & \bm{\ell}_2 & \cdots & \bm{\ell}_k
		\end{bmatrix} \in \mathbb{R}^{n \times k}.
	\]	
Combining \eqref{eq:gpre} and \eqref{eq:lu} yields
	\begin{align} 
			\mathbf{A} \mathbf{L}_k &= \mathbf{D}_{k+1} 	\mathbf{R}_{k+1, V} \widetilde{\mathbf{H}}_{k+1, k} \mathbf{R}_{k, U}^{-1} =: \mathbf{D}_{k+1} \mathbf{H}_{k+1, k}, \label{eq:freehess} \\
			\mathbf{B} \mathbf{D}_k &= \mathbf{L}_{k+1} 	\mathbf{R}_{k+1, U} \widetilde{\mathbf{F}}_{k+1, k} \mathbf{R}_{k, V}^{-1} =: \mathbf{L}_{k+1} \mathbf{F}_{k+1, k}, \nn
		\end{align}
	where $ \mathbf{H}_{k+1, k}=(h_{i,j}) $ and $ \mathbf{F}_{k+1, k}=(f_{i,j}) $ are both upper Hessenberg. Next, we establish the recursion for updating $ \mathbf{D}_k $. The derivation of recursion for updating $ \mathbf{L}_k $ is analogous to that of $ \mathbf{D}_k $, and thus we omit it for brevity. Extracting the $ k $th column of \eqref{eq:freehess} yields 
	\begin{equation} \label{eq:dk}
		h_{k+1, k} \mathbf{d}_{k+1} = \mathbf{A} \bm{\ell}_k - \mathbf{D}_k \mathbf{h}_k,\quad
		\mathbf{h}_k := 
		\begin{bmatrix}
			h_{1, k} & h_{2, k} & \cdots & h_{k, k}
		\end{bmatrix}^\top.
	\end{equation}
	Since the first $ k $ elements of $ \mathbf{d}_{k+1} $ are zero, by \eqref{eq:dk} we have
	\[
	\mathbf{h}_k = \left(\begin{bmatrix} \mathbf{I}_k & \mathbf{0} \end{bmatrix} \mathbf{D}_k\right)^{-1} \begin{bmatrix} \mathbf{I}_k & \mathbf{0} \end{bmatrix} \mathbf{A} \bm{\ell}_k =: \mathbf{D}_k^{-} \mathbf{A} \bm{\ell}_k.
	\]
	Then \eqref{eq:dk} can be rewritten as
	\begin{equation} \label{eq:redk}
		h_{k+1, k} \mathbf{d}_{k+1} = (\mathbf{I} - \mathbf{D}_k \mathbf{D}_k^{-}) \mathbf{A} \bm{\ell}_k.
	\end{equation}
	The matrix $ \mathbf{D}_k^{-} = \left(\begin{bmatrix} \mathbf{I}_k & \mathbf{0} \end{bmatrix} \mathbf{D}_k\right)^{-1} \begin{bmatrix} \mathbf{I}_k & \mathbf{0} \end{bmatrix} $ satisfies
	\[
		\mathbf{D}_k^{-} \mathbf{D}_k \mathbf{D}_k^{-} = \mathbf{D}_k^{-}, \quad
		\mathbf{D}_k \mathbf{D}_k^{-} \mathbf{D}_k = \mathbf{D}_k.
	\]
	We now proceed to exploit the recurrence relation inherent in $ \mathbf{D}_k^{-} $. Since
	\begin{equation} \label{eq:Dk}
		\begin{split}
			\mathbf{D}_k^{-} &= \left(\begin{bmatrix} \mathbf{I}_k & \mathbf{0} \end{bmatrix} \mathbf{D}_k\right)^{-1} 
			\begin{bmatrix} \mathbf{I}_k & \mathbf{0} \end{bmatrix} = 
			\begin{bmatrix}
				\begin{bmatrix} \mathbf{I}_{k-1} & \mathbf{0} \end{bmatrix} \mathbf{D}_{k-1} & \mathbf{0} \\
				\mathbf{e}_{k}^\top \mathbf{D}_{k-1} & 1
			\end{bmatrix}^{-1}
			\begin{bmatrix}
				\begin{bmatrix} \mathbf{I}_{k-1} & \mathbf{0} \end{bmatrix} \\ \mathbf{e}_k^\top
			\end{bmatrix} \\
			&= 
			\begin{bmatrix}
				\left( \begin{bmatrix} \mathbf{I}_{k-1} & \mathbf{0} \end{bmatrix} \mathbf{D}_{k-1} \right)^{-1} & \mathbf{0} \\
				-\mathbf{e}_k^\top \mathbf{D}_{k-1} \left( \begin{bmatrix} \mathbf{I}_{k-1} & \mathbf{0} \end{bmatrix} \mathbf{D}_{k-1} \right)^{-1} & 1
			\end{bmatrix}
			\begin{bmatrix}
				\begin{bmatrix} \mathbf{I}_{k-1} & \mathbf{0} \end{bmatrix} \\ \mathbf{e}_k^\top
			\end{bmatrix} \\
			&= 
			\begin{bmatrix}
				\mathbf{D}_{k-1}^{-} \\
				\mathbf{e}_k^\top \left(\mathbf{I} - \mathbf{D}_{k-1} \mathbf{D}_{k-1}^{-}\right)
			\end{bmatrix},
		\end{split}
	\end{equation}
	we have 
	\begin{equation} \label{eq:reDk}
		\begin{split}
			\mathbf{I} - \mathbf{D}_k \mathbf{D}_k^{-} &= \mathbf{I} - 
			\begin{bmatrix} 
				\mathbf{D}_{k-1} & \mathbf{d}_k 
			\end{bmatrix} 
			\begin{bmatrix}
				\mathbf{D}_{k-1}^{-} \\
				\mathbf{e}_k^\top \left(\mathbf{I} - \mathbf{D}_{k-1} \mathbf{D}_{k-1}^{-}\right)
			\end{bmatrix} \\
			&= (\mathbf{I} - \mathbf{d}_k \mathbf{e}_k^\top) (\mathbf{I} - \mathbf{D}_{k-1} \mathbf{D}_{k-1}^{-}) \\
			\shortvdotswithin{=} %&=(\mathbf{I} - \mathbf{d}_k \mathbf{e}_k^\top)(\mathbf{I} - \mathbf{d}_{k-1} \mathbf{e}_{k-1}^\top)\cdots (\mathbf{I} - \mathbf{d}_1 \mathbf{e}_1^\top)\\
			&= \prod_{j=k}^{1} (\mathbf{I} - \mathbf{d}_j \mathbf{e}_j^\top).
		\end{split}
	\end{equation}
	From the above discussion, we obtain the following result.
	
	\begin{proposition} \label{thm:updk} The columns of unit lower trapezoidal matrices $\mbf D_{k+1}$ and $\mbf L_{k+1}$ satisfy the following recursions
		\begin{align}
			h_{k+1, k} \mathbf{d}_{k+1} &= \prod_{j=k}^{1} (\mathbf{I} - \mathbf{d}_j \mathbf{e}_j^\top) \mathbf{A} \bm{\ell}_k, & h_{i, k} &= \mathbf{e}_i^\top \mathbf{h}_k = \mathbf{e}_i^\top \prod_{j=i-1}^{1} (\mathbf{I} - \mathbf{d}_j \mathbf{e}_j^\top) \mathbf{A} \bm{\ell}_k, \label{eq:updk} \shortintertext{and}
			f_{k+1, k} \bm{\ell}_{k+1} &= \prod_{j=k}^{1} (\mathbf{I} - \bm{\ell}_j \mathbf{e}_j^\top) \mathbf{B} \mathbf{d}_k, & f_{i, k} &= \mathbf{e}_i^\top \mathbf{f}_k = \mathbf{e}_i^\top \prod_{j=i-1}^{1} (\mathbf{I} - \bm{\ell}_j \mathbf{e}_j^\top) \mathbf{B} \mathbf{d}_k. \notag % \label{eq:uplk}
		\end{align}
	\end{proposition}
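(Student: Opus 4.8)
The plan is to obtain \Cref{thm:updk} by directly assembling the identities already established above: the column recursion \eqref{eq:dk}--\eqref{eq:redk}, the relation $\mathbf{h}_k = \mathbf{D}_k^{-}\mathbf{A}\bm{\ell}_k$ stated just before \eqref{eq:redk}, and the two structural identities \eqref{eq:Dk} and \eqref{eq:reDk} for $\mathbf{D}_k^{-}$. I would carry out the argument only for $\mathbf{d}_{k+1}$ and $\mathbf{h}_k$; the recursions for $\bm{\ell}_{k+1}$ and $\mathbf{f}_k$ then follow by the same reasoning with $(\mathbf{A},\bm{\ell}_k,\mathbf{D}_k)$ replaced by $(\mathbf{B},\mathbf{d}_k,\mathbf{L}_k)$ and the relation $\mathbf{A}\mathbf{L}_k = \mathbf{D}_{k+1}\mathbf{H}_{k+1,k}$ in \eqref{eq:freehess} replaced by its companion $\mathbf{B}\mathbf{D}_k = \mathbf{L}_{k+1}\mathbf{F}_{k+1,k}$.

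First I would substitute \eqref{eq:reDk} into \eqref{eq:redk} to get $h_{k+1,k}\mathbf{d}_{k+1} = \prod_{j=k}^{1}(\mathbf{I}-\mathbf{d}_j\mathbf{e}_j^\top)\mathbf{A}\bm{\ell}_k$, which is exactly the left identity in \eqref{eq:updk}. For the entries, I would write $h_{i,k} = \mathbf{e}_i^\top\mathbf{h}_k = \mathbf{e}_i^\top\mathbf{D}_k^{-}\mathbf{A}\bm{\ell}_k$, so the task reduces to identifying the $i$th row of $\mathbf{D}_k^{-}$ for $i\le k$. The block identity \eqref{eq:Dk} says that the last row of $\mathbf{D}_k^{-}$ is $\mathbf{e}_k^\top(\mathbf{I}-\mathbf{D}_{k-1}\mathbf{D}_{k-1}^{-})$ while its leading $k-1$ rows coincide with $\mathbf{D}_{k-1}^{-}$; iterating this down to level $i$, the $i$th row of $\mathbf{D}_k^{-}$ equals the last row of $\mathbf{D}_i^{-}$, namely $\mathbf{e}_i^\top(\mathbf{I}-\mathbf{D}_{i-1}\mathbf{D}_{i-1}^{-})$. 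Applying \eqref{eq:reDk} with $k$ replaced by $i-1$ rewrites this as $\mathbf{e}_i^\top\prod_{j=i-1}^{1}(\mathbf{I}-\mathbf{d}_j\mathbf{e}_j^\top)$, and multiplying on the right by $\mathbf{A}\bm{\ell}_k$ yields the right identity in \eqref{eq:updk}.

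I do not expect a genuine obstacle: the proposition is essentially a repackaging of the preceding derivation. The only points needing care are the degenerate cases $i=1$ and $k=1$, where the products over $j$ are empty and interpreted as $\mathbf{I}$ (so that $\mathbf{D}_0^{-}=\mathbf{0}$ and $h_{1,k}=\mathbf{e}_1^\top\mathbf{A}\bm{\ell}_k$), and the observation that the unit lower trapezoidal structure of $\mathbf{D}_k$ and $\mathbf{L}_k$ makes $\begin{bmatrix}\mathbf{I}_k & \mathbf{0}\end{bmatrix}\mathbf{D}_k$ and $\begin{bmatrix}\mathbf{I}_k & \mathbf{0}\end{bmatrix}\mathbf{L}_k$ unit lower triangular, hence invertible, which is what makes \eqref{eq:lu}, \eqref{eq:dk}, and $\mathbf{D}_k^{-}$ well defined in the first place. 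The $\mathbf{L}$-side statements are then obtained verbatim by the symmetry noted above.
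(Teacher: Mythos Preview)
Your proposal is correct and follows essentially the same route as the paper's own proof: both combine \eqref{eq:redk} with \eqref{eq:reDk} for the left identity, and both iterate the block recursion \eqref{eq:Dk} to identify the $i$th row of $\mathbf{D}_k^{-}$ with $\mathbf{e}_i^\top(\mathbf{I}-\mathbf{D}_{i-1}\mathbf{D}_{i-1}^{-})$ before applying \eqref{eq:reDk} once more. Your added remarks on the empty-product convention and the invertibility of $\begin{bmatrix}\mathbf{I}_k & \mathbf{0}\end{bmatrix}\mathbf{D}_k$ are helpful clarifications but do not change the argument.
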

	
	\begin{proof}
		From \eqref{eq:dk} and \eqref{eq:reDk}, we have
		\begin{align*}
			h_{k+1, k} \mathbf{d}_{k+1} = \mathbf{A} \bm{\ell}_k - \mathbf{D}_k \mathbf{h}_k = (\mathbf{I} - \mathbf{D}_k \mathbf{D}_k^{-}) \mathbf{A} \bm{\ell}_k = \prod_{j=k}^{1} (\mathbf{I} - \mathbf{d}_j \mathbf{e}_j^\top) \mathbf{A} \bm{\ell}_k.
		\end{align*}
		From recursion \eqref{eq:Dk}, we have 
		\[
			\mathbf{e}_i^\top \mathbf{D}_k^{-} = \mathbf{e}_i^\top \mathbf{D}_{k-1}^{-} = \dotsb = \mathbf{e}_i^\top \mathbf{D}_i^\top = \mathbf{e}_i^\top (\mathbf{I} - \mathbf{D}_{i-1} \mathbf{D}_{i-1}^{-}).
		\]
		Therefore,
		\[
			h_{i, k} = \mathbf{e}_i^\top\mbf h_k= \mathbf{e}_i^\top \mathbf{D}_k^{-} \mathbf{A} \bm{\ell}_k = \mathbf{e}_i^\top (\mathbf{I} - \mathbf{D}_{i-1} \mathbf{D}_{i-1}^{-}) \mathbf{A} \bm{\ell}_k = \mathbf{e}_i^\top \prod_{j=i-1}^{1} (\mathbf{I} - \mathbf{d}_j \mathbf{e}_j^\top) \mathbf{A} \bm{\ell}_k.
		\]
		The proof of the recursions of $ \bm{\ell}_{k+1} $ and $ f_{i, k} $ is analogous, and thus we omit the details.
	\end{proof}
	
	We summarize the implementation details of the simultaneous Hessenberg process in \Cref{alg:freehess}. \\
	
		\begin{algorithm}[H]
		% \setstretch{1.15}
		\caption{Simultaneous Hessenberg process}
		\label{alg:freehess}
		
		\KwIn{$ \mathbf{A} \in \mathbb{R}^{m \times n} $, $ \mathbf{B} \in \mathbb{R}^{n \times m} $, and nonzero $ \mathbf{b} \in \mathbb{R}^m $ and $ \mathbf{c} \in \mathbb{R}^n $}
			
		$ \beta = \mathbf{e}_1^\top \mathbf{b} $, $ \mathbf{d}_1 = \mathbf{b} / \beta $\;
		$ \gamma = \mathbf{e}_1^\top \mathbf{c} $, $ \bm{\ell}_1 = \mathbf{c} / \gamma $\;
			
		\For{$ k = 1, 2, \dots $}
		{
			$ \mathbf{d} = \mathbf{A} \bm{\ell}_k $, $ \bm{\ell} = \mathbf{B} \mathbf{d}_k $\;
				
			\For{$ i = 1, 2, \dots, k $}
			{
				$ h_{i, k} = \mathbf{d}(i) $, $ f_{i, k} = \bm{\ell}(i) $\;
				$ \mathbf{d} = \mathbf{d} - h_{i, k} \mathbf{d}_i $\;
				$ \bm{\ell} = \bm{\ell} - f_{i, k} \bm{\ell}_i $\;
			}
			$ h_{k+1, k} = \mathbf{d}(k+1) $, $ \mathbf{d}_{k+1} = \mathbf{d} / h_{k+1, k} $\;
			$ f_{k+1, k} = \bm{\ell}(k+1) $, $ \bm{\ell}_{k+1} = \bm{\ell} / f_{k+1, k} $\;
		}
	\end{algorithm}
	
	The MATLAB-type notation ``$ \mathbf{d}(i) $'' in \Cref{alg:freehess} denotes the $ i $th element of $ \mathbf{d} $. When $ h_{k+1, k} = 0 $ or $ f_{k+1, k} = 0 $, \Cref{alg:freehess} terminates. From lines 10 and 11 of \Cref{alg:freehess}, we know that $ h_{k+1, k} $ and $ f_{k+1, k} $ are the $ (k+1) $st elements of $ \mathbf{d} $ and $ \bm{\ell} $, respectively. There exists a potential scenario where $ \mathbf{d}_{k+1} \neq \mathbf{0} $ or $ \bm{\ell}_{k+1} \neq \mathbf{0} $ when the termination occurs. To address this issue, we use the pivoting approach proposed in \cite{heyouni2008new}.

		Let $ \mathbf{D}_k $ and $ \mathbf{L}_k $ be obtained from the pivoted LU factorizations $\mathbf{V}_k = \mathbf{D}_k \mathbf{R}_{k, V}$ and $
		\mathbf{U}_k = \mathbf{L}_k \mathbf{R}_{k, U}$, respectively. With the introduction of pivoting, $ \mathbf{D}_k $ and $ \mathbf{L}_k $ are no longer unit lower trapezoidal, but there exist permutations $ \mathbf{P}_k \in \mathbb{R}^{m \times m} $ and $ \mathbf{Q}_k \in \mathbb{R}^{n \times n} $ so that $ \mathbf{P}_k \mathbf{D}_k $ and $ \mathbf{Q}_k \mathbf{L}_k $ are unit lower trapezoidal. Next, we establish the recursion for updating $ \mathbf{D}_k $. The recursion for $ \mathbf{L}_k $ is analogous, and thus we omit the details for brevity.	Let $ \mathbf{P}_k = \mathbf{G}_k \mathbf{G}_{k-1} \cdots \mathbf{G}_1 \in \mathbb{R}^{m \times m} $ be the product of $ k $ permutations. Define
	\[
		\widetilde{\mathbf{d}}_{k+1} :=  \mathbf{A} \bm{\ell}_k - \mathbf{D}_k \mathbf{h}_k.
	\]
	Let $ \mathbf{G}_{k+1}$ be the permutation which exchanges the $ i_0 $'th and the $ (k+1) $st elements of $ \mathbf{P}_k \widetilde{\mathbf{d}}_{k+1} $, where $$ i_0 \in \argmax_{k+1 \le i \le m} \abs{\mathbf{e}_i^\top \mathbf{P}_k \widetilde{\mathbf{d}}_{k+1}}. $$ Multiplying $ \mathbf{P}_{k+1} = \mathbf{G}_{k+1} \mathbf{P}_k $ on both sides of \eqref{eq:dk} gives
	\begin{equation} \label{eq:pdk}
		h_{k+1, k} \mathbf{P}_{k+1} \mathbf{d}_{k+1} = \mathbf{P}_{k+1} \mathbf{A} \bm{\ell}_k - \mathbf{P}_{k+1} \mathbf{D}_k \mathbf{h}_k.
	\end{equation}
	Since the first $ k $ elements of $ \mathbf{P}_{k+1} \mathbf{d}_{k+1} $ are zero and $ \begin{bmatrix} \mathbf{I}_k & \mathbf{0} \end{bmatrix} \mathbf{G}_{k+1} = \begin{bmatrix} \mathbf{I}_k & \mathbf{0} \end{bmatrix} $, extracting the first $ k $ rows of \eqref{eq:pdk} yields 
	\begin{align*}
		\mathbf{0} 
		&= \begin{bmatrix}
			\mathbf{I}_k & \mathbf{0}
		\end{bmatrix} 
		\mathbf{P}_{k+1} (\mathbf{A} \bm{\ell}_k - \mathbf{D}_k \mathbf{h}_k) 
		= \begin{bmatrix}
			\mathbf{I}_k & \mathbf{0}
		\end{bmatrix} \mathbf{G}_{k+1} \mathbf{P}_k (\mathbf{A} \bm{\ell}_k - \mathbf{D}_k \mathbf{h}_k) \\
		&= \begin{bmatrix}
			\mathbf{I}_k & \mathbf{0}
		\end{bmatrix} \mathbf{P}_k (\mathbf{A} \bm{\ell}_k - \mathbf{D}_k \mathbf{h}_k).
	\end{align*}
	Therefore, $ \mathbf{h}_k $ is given by
	\begin{equation} \label{eq:phk}
		\mathbf{h}_k = \left(
		\begin{bmatrix}
			\mathbf{I}_k & \mathbf{0} 
		\end{bmatrix}
		\mathbf{P}_k \mathbf{D}_k \right)^{-1} 
		\begin{bmatrix}
			\mathbf{I}_k & \mathbf{0}
		\end{bmatrix} \mathbf{P}_k \mathbf{A} \bm{\ell}_k 
		=: (\mathbf{P}_k \mathbf{D}_k)^{-} \mathbf{P}_k \mathbf{A} \bm{\ell}_k.
	\end{equation}
	Substituting \eqref{eq:phk} into \eqref{eq:pdk} yields
	\begin{equation} \label{eq:repdk}
		h_{k+1, k} \mathbf{d}_{k+1} = \mathbf{A}\bm{\ell}_k - \mathbf{D}_k (\mathbf{P}_k \mathbf{D}_k)^{-} \mathbf{P}_k \mathbf{A} \bm{\ell}_k = (\mathbf{I} - \mathbf{D}_k (\mathbf{P}_k \mathbf{D}_k)^{-} \mathbf{P}_k) \mathbf{A} \bm{\ell}_k.
	\end{equation}
	Analogous to the case without pivoting, we have
	\begin{subequations} \label{eq:PDk}
		\begin{equation}
			\begin{split}
				(\mathbf{P}_k \mathbf{D}_k)^{-} \mathbf{P}_k
				&= \left(\begin{bmatrix} \mathbf{I}_k & \mathbf{0} \end{bmatrix} \mathbf{P}_k \mathbf{D}_k\right)^{-1}
				\begin{bmatrix} \mathbf{I}_k & \mathbf{0} \end{bmatrix} \mathbf{P}_k \\
				&= 
				\begin{bmatrix}
					\begin{bmatrix} \mathbf{I}_{k-1} & \mathbf{0} \end{bmatrix} \mathbf{P}_{k-1} \mathbf{D}_{k-1} & \mathbf{0} \\
					\mathbf{e}_{k}^\top \mathbf{P}_k \mathbf{D}_{k-1} & 1
				\end{bmatrix}^{-1}
				\begin{bmatrix}
					\begin{bmatrix} \mathbf{I}_{k-1} & \mathbf{0} \end{bmatrix} \mathbf{P}_{k-1} \\
					\mathbf{e}_k^\top \mathbf{P}_k
				\end{bmatrix} \\
				&= 
				\begin{bmatrix}
					\left( \begin{bmatrix} \mathbf{I}_{k-1} & \mathbf{0} \end{bmatrix} \mathbf{P}_{k-1} \mathbf{D}_{k-1} \right)^{-1} & \mathbf{0} \\
					-\mathbf{e}_k^\top \mathbf{P}_k \mathbf{D}_{k-1} \left( \begin{bmatrix} \mathbf{I}_{k-1} & \mathbf{0} \end{bmatrix} \mathbf{P}_{k-1} \mathbf{D}_{k-1} \right)^{-1} & 1
				\end{bmatrix}
				\begin{bmatrix}
					\begin{bmatrix} \mathbf{I}_{k-1} & \mathbf{0} \end{bmatrix} \mathbf{P}_{k-1} \\ \mathbf{e}_k^\top \mathbf{P}_k
				\end{bmatrix} \\
				&= 
				\begin{bmatrix}
					(\mathbf{P}_{k-1} \mathbf{D}_{k-1})^{-} \mathbf{P}_{k-1} \\
					\mathbf{e}_k^\top \mathbf{P}_k \paren[\big]{\mathbf{I} - \mathbf{D}_{k-1} \paren{\mathbf{P}_{k-1} \mathbf{D}_{k-1}}^{-} \mathbf{P}_{k-1}}
				\end{bmatrix},
			\end{split}
		\end{equation}
		and
		\begin{equation}
			\begin{split}
				\mathbf{I} - \mathbf{D}_k \paren{\mathbf{P}_k \mathbf{D}_k}^{-} \mathbf{P}_k &= \mathbf{I} - 
				\begin{bmatrix} 
					\mathbf{D}_{k-1} & \mathbf{d}_k 
				\end{bmatrix} 
				\begin{bmatrix}
					\paren{\mathbf{P}_{k-1} \mathbf{D}_{k-1}}^{-} \mathbf{P}_{k-1} \\
					\mathbf{e}_k^\top \mathbf{P}_k \paren[\big]{\mathbf{I} - \mathbf{D}_{k-1} \paren{\mathbf{P}_{k-1} \mathbf{D}_{k-1}}^{-} \mathbf{P}_{k-1}}
				\end{bmatrix} \\
				&= \paren[\big]{\mathbf{I} - \mathbf{d}_k \mathbf{e}_k^\top \mathbf{P}_k} \paren[\big]{\mathbf{I} - \mathbf{D}_{k-1} \paren{\mathbf{P}_{k-1} \mathbf{D}_{k-1}}^{-} \mathbf{P}_{k-1}} \\
				\shortvdotswithin{=}
				&= \prod_{j=k}^{1} \paren[\big]{\mathbf{I} -  \mathbf{d}_j \mathbf{e}_j^\top \mathbf{P}_j}.
			\end{split}
		\end{equation}
	\end{subequations}
	From the above discussion, we obtain the following result.
	\begin{proposition} \label{thm:uppdk}
		Let $ \mathbf{D}_k $ and $ \mathbf{L}_k $ be the matrices in the pivoted LU factorizations $\mathbf{V}_k = \mathbf{D}_k \mathbf{R}_{k, V}$ and $
		\mathbf{U}_k = \mathbf{L}_k \mathbf{R}_{k, U}$, respectively. Let $\mbf P_k$ and $\mbf Q_k$ be permutations so that $ \mathbf{P}_k \mathbf{D}_k $ and $ \mathbf{Q}_k \mathbf{L}_k $ are unit lower trapezoidal. The columns of $\mbf D_{k+1}$ and $\mbf L_{k+1}$ satisfy the following recursions
		\begin{align}
			h_{k+1, k} \mathbf{d}_{k+1} &= \prod_{j=k}^{1} (\mathbf{I} - \mathbf{d}_j \mathbf{e}_j^\top \mathbf{P}_j) \mathbf{A} \bm{\ell}_k, &\ 
			h_{i, k} &= \mathbf{e}_i^\top \mathbf{h}_k = \mathbf{e}_i^\top \mathbf{P}_i \prod_{j=i-1}^{1} (\mathbf{I} - \mathbf{d}_j \mathbf{e}_j^\top \mathbf{P}_j) \mathbf{A} \bm{\ell}_k, \label{eq:uppdk} \shortintertext{and}
			f_{k+1, k} \bm{\ell}_{k+1} &= \prod_{j=k}^{1} (\mathbf{I} - \bm{\ell}_j \mathbf{e}_j^\top \mathbf{Q}_j) \mathbf{B} \mathbf{d}_k, &\ f_{i, k} &= \mathbf{e}_i^\top \mathbf{f}_k = \mathbf{e}_i^\top \mathbf{Q}_i \prod_{j=i-1}^{1} (\mathbf{I} - \bm{\ell}_j \mathbf{e}_j^\top \mathbf{Q}_j) \mathbf{B} \mathbf{d}_k. \notag % \label{eq:upplk}
		\end{align}
	\end{proposition}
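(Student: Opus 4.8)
The plan is to mirror the proof of Proposition~\ref{thm:updk}, replacing the unpivoted identities \eqref{eq:redk}--\eqref{eq:reDk} by their pivoted counterparts \eqref{eq:repdk} and \eqref{eq:PDk}. Since the recursions for $\mathbf{L}_{k+1}$ follow from those for $\mathbf{D}_{k+1}$ by interchanging $(\mathbf{A},\mathbf{D}_k,\mathbf{P}_k)$ with $(\mathbf{B},\mathbf{L}_k,\mathbf{Q}_k)$, I would only treat the $\mathbf{D}_{k+1}$ recursions in detail and invoke this symmetry for the rest.

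For the recursion for $\mathbf{d}_{k+1}$, combining \eqref{eq:repdk} with the second identity of \eqref{eq:PDk} gives directly
\[
h_{k+1,k}\mathbf{d}_{k+1} = \paren[\big]{\mathbf{I} - \mathbf{D}_k(\mathbf{P}_k\mathbf{D}_k)^{-}\mathbf{P}_k}\mathbf{A}\bm{\ell}_k = \prod_{j=k}^{1}\paren[\big]{\mathbf{I} - \mathbf{d}_j\mathbf{e}_j^\top\mathbf{P}_j}\mathbf{A}\bm{\ell}_k ,
\]
which is the first stated formula. For the entries $h_{i,k}$, recall from \eqref{eq:phk} that $\mathbf{h}_k = (\mathbf{P}_k\mathbf{D}_k)^{-}\mathbf{P}_k\mathbf{A}\bm{\ell}_k$, so it suffices to show that $\mathbf{e}_i^\top(\mathbf{P}_k\mathbf{D}_k)^{-}\mathbf{P}_k$ stabilizes once $k\ge i$ and equals $\mathbf{e}_i^\top\mathbf{P}_i\prod_{j=i-1}^{1}(\mathbf{I}-\mathbf{d}_j\mathbf{e}_j^\top\mathbf{P}_j)$. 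This can be read off the first (block) identity of \eqref{eq:PDk}: its top block shows $\mathbf{e}_i^\top(\mathbf{P}_k\mathbf{D}_k)^{-}\mathbf{P}_k = \mathbf{e}_i^\top(\mathbf{P}_{k-1}\mathbf{D}_{k-1})^{-}\mathbf{P}_{k-1}$ whenever $i<k$; telescoping from $k$ down to $i$ and then applying the bottom block at level $i$, namely $\mathbf{e}_i^\top\mathbf{P}_i\paren[\big]{\mathbf{I}-\mathbf{D}_{i-1}(\mathbf{P}_{i-1}\mathbf{D}_{i-1})^{-}\mathbf{P}_{i-1}}$, together with the second identity of \eqref{eq:PDk}, yields $\mathbf{e}_i^\top(\mathbf{P}_k\mathbf{D}_k)^{-}\mathbf{P}_k = \mathbf{e}_i^\top\mathbf{P}_i\prod_{j=i-1}^{1}(\mathbf{I}-\mathbf{d}_j\mathbf{e}_j^\top\mathbf{P}_j)$. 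Multiplying on the right by $\mathbf{A}\bm{\ell}_k$ gives the stated formula for $h_{i,k}$.

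The calculations are routine; the one point deserving care — and the main obstacle — is justifying the block decomposition underlying \eqref{eq:PDk}, in particular that $\begin{bmatrix}\mathbf{I}_{k-1} & \mathbf{0}\end{bmatrix}\mathbf{P}_k\mathbf{D}_{k-1} = \begin{bmatrix}\mathbf{I}_{k-1} & \mathbf{0}\end{bmatrix}\mathbf{P}_{k-1}\mathbf{D}_{k-1}$ and $\begin{bmatrix}\mathbf{I}_k & \mathbf{0}\end{bmatrix}\mathbf{G}_{k+1} = \begin{bmatrix}\mathbf{I}_k & \mathbf{0}\end{bmatrix}$. Both hold because each permutation $\mathbf{G}_{j+1}$ only exchanges coordinates with indices in $\{j+1,\dots,m\}$, so left multiplication by $\mathbf{G}_{j+1}$ leaves the first $j$ rows of any matrix unchanged — this is exactly the property used to derive \eqref{eq:phk}. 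With that in hand, the inductive telescoping in \eqref{eq:PDk}, and hence the proposition, follows; the recursions for $\bm{\ell}_{k+1}$ and $f_{i,k}$ are obtained verbatim with $\mathbf{B}$, $\mathbf{L}_k$, $\mathbf{Q}_k$ in place of $\mathbf{A}$, $\mathbf{D}_k$, $\mathbf{P}_k$.
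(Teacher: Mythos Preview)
Your proposal is correct and follows essentially the same route as the paper's own proof: combine \eqref{eq:repdk} with the product formula in \eqref{eq:PDk} for $h_{k+1,k}\mathbf{d}_{k+1}$, then use the top/bottom block structure of \eqref{eq:PDk} to telescope $\mathbf{e}_i^\top(\mathbf{P}_k\mathbf{D}_k)^{-}\mathbf{P}_k$ down to level $i$ and obtain $h_{i,k}$. The extra care you take to justify why $\mathbf{G}_{j+1}$ fixes the first $j$ rows is a nice clarification, but it is already established in the text preceding the proposition.
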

	
	\begin{proof}
		It follows from \eqref{eq:repdk} and \eqref{eq:PDk} that
		\[
			h_{k+1, k} \mathbf{d}_{k+1} = \bigl(\mathbf{I} - \mathbf{D}_k (\mathbf{P}_k \mathbf{D}_k)^{-} \mathbf{P}_k \bigr) \mathbf{A} \bm{\ell}_k 
			= \prod_{j=k}^{1} \bigl(\mathbf{I} - \mathbf{d}_j \mathbf{e}_j^\top \mathbf{P}_j \bigr) \mathbf{A} \bm{\ell}_k.
		\] 
		Since 
		\[
			\mathbf{e}_i^\top (\mathbf{P}_k \mathbf{D}_k)^{-} \mathbf{P}_k = 
			\mathbf{e}_i^\top (\mathbf{P}_{k-1} \mathbf{D}_{k-1})^{-} \mathbf{P}_{k-1} = \dotsb = 
			\mathbf{e}_i^\top (\mathbf{P}_i \mathbf{D}_i)^{-} \mathbf{P}_i,
		\]
		we have 
		\begin{align*}
			h_{i,k} &= \mathbf{e}_i^\top (\mathbf{P}_k \mathbf{D}_k)^{-} \mathbf{P}_k \mathbf{A} \bm{\ell}_k = \mathbf{e}_i^\top (\mathbf{P}_i \mathbf{D}_i)^{-} \mathbf{P}_i \mathbf{A} \bm{\ell}_k \\
			&= \mathbf{e}_i^\top \mathbf{P}_i \bigl(\mathbf{I} - \mathbf{D}_{i-1} (\mathbf{P}_{i-1} \mathbf{D}_{i-1})^{-} \mathbf{P}_{i-1} \bigr) \mathbf{A} \bm{\ell}_k \\
			&= \mathbf{e}_i^\top \mathbf{P}_i \prod_{j=i-1}^{1} \bigl( \mathbf{I} - \mathbf{d}_j \mathbf{e}_j^\top \mathbf{P}_j \bigr) \mathbf{A} \bm{\ell}_k.
		\end{align*}
		The proof of the recursions of $ \bm{\ell}_{k+1} $ and $ f_{i, k} $ is analogous, and thus we omit the details.
	\end{proof}

	We summarize the implementation details of the simultaneous Hessenberg process with pivoting in \Cref{alg:pfreehess}. \\
	
	\begin{algorithm}[H]
%		\setstretch{1.15}
		\caption{Simultaneous Hessenberg process with pivoting}
		\label{alg:pfreehess}
		
		\KwIn{$ \mathbf{A} \in \mathbb{R}^{m \times n} $, $ \mathbf{B} \in \mathbb{R}^{n \times m} $, and nonzero $ \mathbf{b} \in \mathbb{R}^m $ and $ \mathbf{c} \in \mathbb{R}^n $}
			
		$ \mathbf{p} = \begin{bmatrix} 1 & 2 & \cdots & m \end{bmatrix}^\top $, $ \mathbf{q} = \begin{bmatrix} 1 & 2 & \cdots & n \end{bmatrix}^\top $\;
			
		Determine $ i_0 $ and $ j_0 $ such that $ \abs{b_{i_0}} = \max_{1 \le i \le m} \abs{\mathbf{e}_i^\top \mathbf{b}} $ and $ \abs{c_{j_0}} = \max_{1 \le j \le n} \abs{\mathbf{e}_j^\top \mathbf{c}} $\;
			
		$ \beta = b_{i_0} $, $ \mathbf{d}_1 = \mathbf{b} / \beta $\;
		$ \gamma = c_{j_0} $, $ \bm{\ell}_1 = \mathbf{c} / \gamma $\;
		$ \mathbf{p}(1) \leftrightharpoons \mathbf{p}(i_0) $, $ \mathbf{q}(1) \leftrightharpoons \mathbf{q}(j_0) $\;
			
		\For{$ k = 1, 2, \dots $}
		{
			$ \mathbf{d} = \mathbf{A} \bm{\ell}_k $, $ \bm{\ell} = \mathbf{B} \mathbf{d}_k $\;
			
			\For{$ i = 1, 2, \dots, k $}
			{
				$ h_{i, k} = \mathbf{d}(\mathbf{p}(i)) $, $ f_{i, k} = \bm{\ell}(\mathbf{q}(i)) $\;
				$ \mathbf{d} = \mathbf{d} - h_{i, k} \mathbf{d}_i $\;
				$ \bm{\ell} = \bm{\ell} - f_{i, k} \bm{\ell}_i $\;
			}
			Determine $ i_0 $ and $ j_0 $ such that $ \abs{d_{i_0}} = \max_{k+1 \le i \le m} \abs{\mathbf{d}(\mathbf{p}(i))} $ and $ \abs{\ell_{j_0}} = \max_{k+1 \le j \le n} \abs{\bm{\ell}(\mathbf{q}(j))} $\;
				
			$ h_{k+1, k} = d_{i_0} $, $ \mathbf{d}_{k+1} = \mathbf{d} / h_{k+1, k} $\;
			$ f_{k+1, k} = \ell_{j_0} $, $ \bm{\ell}_{k+1} = \bm{\ell} / f_{k+1, k} $\;
			$ \mathbf{p}(k+1) \leftrightharpoons \mathbf{p}(i_0) $, $ \mathbf{q}(k+1) \leftrightharpoons \mathbf{q}(j_0) $\;
		}
	\end{algorithm}
	
	From the above discussions, we have the relations \beq\label{mrelations}			\mathbf{A} \mathbf{L}_k = \mathbf{D}_{k+1} \mathbf{H}_{k+1, k}, \qquad
			\mathbf{B} \mathbf{D}_k = \mathbf{L}_{k+1} \mathbf{F}_{k+1, k}.\eeq If \Cref{alg:pfreehess} terminates (i.e., $ h_{k+1, k} = 0 $ or $ f_{k+1, k} = 0 $), the pivoting strategy ensures that $ \mathbf{d}_{k+1} = \mathbf{0} $ or $ \bm{\ell}_{k+1} = \mathbf{0} $.
	
	\begin{remark}
		Notice that there are two equivalent representations for the basis of the simultaneous Hessenberg process, i.e., \eqref{eq:repdk} and \eqref{eq:uppdk}. Numerical results in \Cref{ex:cond} demonstrates that the implementation based on \eqref{eq:uppdk} is better than that based on \eqref{eq:repdk}. %This is analogous to the relation between the modified Gram--Schmidt process and the classical Gram--Schmidt process. 
	\end{remark}
	
	\begin{example} \label{ex:cond}
		Consider the MATLAB test matrix {\tt Lotkin} which is ill-conditioned and has many negative eigenvalues of small magnitude. We generate $ \mathbf{A} $, $ \mathbf{B} $, $ \mathbf{b} $, and $ \mathbf{c} $ by the following MATLAB's scripts:
		\begin{verbatim}
			m = 1000; n = 1000; 
			A = gallery('lotkin', n); B = A'; 
			b = ones(m, 1); c = ones(n, 1);
		\end{verbatim}
		We plot the condition number of $ \mathbf{D}_k $ generated by \eqref{eq:repdk} and \eqref{eq:uppdk} for increasing $ k $ in \Cref{fig:cond}. We see that the condition number of $ \mathbf{D}_k $ generated by \eqref{eq:repdk} increases rapidly after $ k = 30 $, but that generated by \eqref{eq:uppdk} is nearly constant $(\approx 100)$ for $ k \geq 2 $.
		\end{example}
		
		\begin{figure}[H]
			\centering
			\includegraphics[height=2.25in]{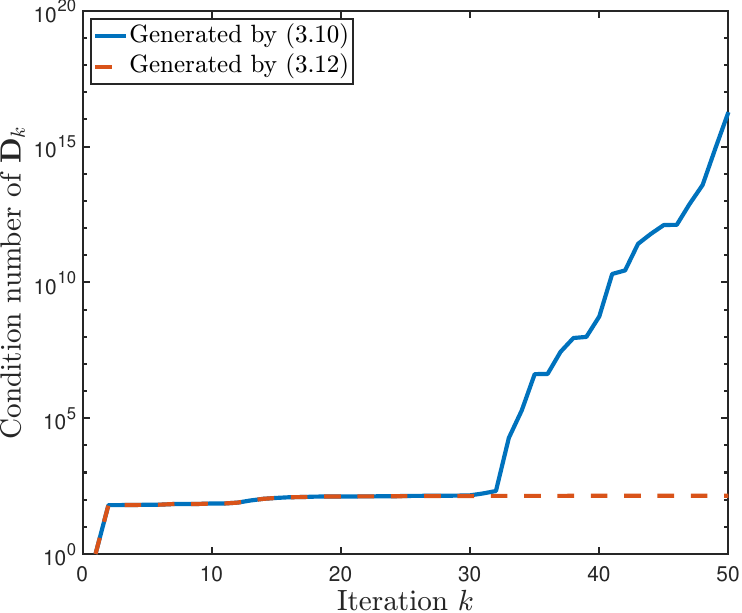}
			\caption{Condition number of $ \mathbf{D}_k $ generated by \eqref{eq:repdk} and \eqref{eq:uppdk}.}
			\label{fig:cond}
		\end{figure}

	\section{GP-CMRH} \label{sec:gpcmrh}
	We now introduce GP-CMRH for solving \eqref{eq:sys}. The derivation of GP-CMRH parallels that of GPMR. GP-CMRH is based on the simultaneous Hessenberg process, whereas GPMR is based on the orthogonal Hessenberg reduction. 
	
	Let $\mbf d_k$ and $\bm{\ell}_k$ be generated in \Cref{alg:pfreehess}. Let
	\[
		\mathbf{K}_0 = 
		\begin{bmatrix}
			\mathbf{0} & \mathbf{A} \\
			\mathbf{B} & \mathbf{0}
		\end{bmatrix}, 	
	\]
	and let 
	\[
		\bm{\Pi}_k = 
		\begin{bmatrix}
			\mathbf{e}_1 & \mathbf{e}_{k+1} & \cdots & \mathbf{e}_i & \mathbf{e}_{k+i} & \cdots & \mathbf{e}_k & \mathbf{e}_{2k}
		\end{bmatrix} \in \mathbb{R}^{2k \times 2k}
	\]
	be the permutation matrix introduced by Paige \cite{paige1974bidiagonalization}. Define 
	\begin{equation} \label{eq:Wk}
		\mathbf{W}_k := 
		\begin{bmatrix}
			\mathbf{D}_k \\ & \mathbf{L}_k
		\end{bmatrix} \bm{\Pi}_k =
		\begin{bmatrix}
			\mathbf{w}_1 & \mathbf{w}_2 & \cdots & \mathbf{w}_k
		\end{bmatrix}, \quad
		\mathbf{w}_k = 
		\begin{bmatrix}
			\mathbf{d}_k & \mathbf{0} \\ \mathbf{0} & \bm{\ell}_k
		\end{bmatrix}.
	\end{equation}
	Combining \eqref{mrelations} and \eqref{eq:Wk} yields
	\begin{equation} \label{eq:K0W}
		\begin{split}
			\mathbf{K}_0 \mathbf{W}_k = 
			\begin{bmatrix}
				\mathbf{D}_{k+1} \\ & \mathbf{L}_{k+1}
			\end{bmatrix} \bm{\Pi}_{k+1} \bm{\Pi}_{k+1}^\top
			\begin{bmatrix}
				& \mathbf{H}_{k+1, k} \\ \mathbf{F}_{k+1, k}
			\end{bmatrix} \bm{\Pi}_k =: \mathbf{W}_{k+1} \mathbf{G}_{k+1, k},
		\end{split}
	\end{equation}
	where 
	\[
		\mathbf{G}_{k+1, k} = 
		\begin{bmatrix}
			\bm{\Phi}_{1,1} & \bm{\Phi}_{1,2} & \cdots & \bm{\Phi}_{1,k} \\
			\bm{\Phi}_{2,1} & \bm{\Phi}_{2,2} & \ddots & \vdots \\
			& \ddots & \ddots & \bm{\Phi}_{k-1,k} \\
			& & \ddots & \bm{\Phi}_{k,k} \\
			& & & \bm{\Phi}_{k+1,k}
		\end{bmatrix}, \quad
		\bm{\Phi}_{i,j} = 
		\begin{bmatrix}
			0 & h_{i,j} \\ f_{i, j} & 0
		\end{bmatrix}.
	\]

Define
	\[
		\bm{\Lambda} := 
		\begin{bmatrix}
			\lambda \mathbf{I} \\ & \mu \mathbf{I}
		\end{bmatrix}, \quad
		\mathbf{K} := \mathbf{K}_0 + \bm{\Lambda} = 
		\begin{bmatrix}
			\lambda \mathbf{I} & \mathbf{A} \\
			\mathbf{B} & \mu \mathbf{I} 
		\end{bmatrix}, \quad
		\mathbf{g} := 
		\begin{bmatrix}
			\mathbf{b} \\ \mathbf{c}
		\end{bmatrix}.
	\]
	Since 
	\[
		\bm{\Lambda} \mathbf{w}_k = 
		\begin{bmatrix}
			\lambda \mathbf{I} \\ & \mu \mathbf{I}
		\end{bmatrix} \mathbf{w}_k = \mathbf{w}_k
		\begin{bmatrix}
			\lambda \\ & \mu
		\end{bmatrix},
	\]
	from \eqref{eq:K0W}, we obtain
	\begin{equation} \label{eq:KW}
		\mathbf{K} \mathbf{W}_k = (\mathbf{K}_0 + \bm{\Lambda}) \mathbf{W}_k = \mathbf{W}_{k+1} \mathbf{S}_{k+1, k},
	\end{equation}
	where
	\[
		\mathbf{S}_{k+1, k} := 
		\begin{bmatrix}
			\bm{\Theta}_{1,1} & \bm{\Phi}_{1,2} & \cdots & \bm{\Phi}_{1,k} \\
			\bm{\Phi}_{2,1} & \bm{\Theta}_{2,2} & \ddots & \vdots \\
			& \ddots & \ddots & \bm{\Phi}_{k-1,k} \\
			& & \ddots & \bm{\Theta}_{k,k} \\
			& & & \bm{\Phi}_{k+1,k}
		\end{bmatrix}, \quad
		\bm{\Theta}_{j,j} = \bm{\Phi}_{j, j} + 
		\begin{bmatrix}
			\lambda \\ & \mu
		\end{bmatrix} = 
		\begin{bmatrix}
			\lambda & h_{j,j} \\ f_{j, j} & \mu
		\end{bmatrix}.
	\]
	At step $ k $, GP-CMRH seeks the $ k $th iterate
	\[
		\begin{bmatrix}
			\mathbf{x}_k \\ \mathbf{y}_k
		\end{bmatrix}
		 = \mathbf{W}_k \mathbf{z}_k,
	\] where $\mathbf{z}_k \in \mathbb{R}^{2k}$ solves the minimization problem
	\begin{equation} \label{eq:zk}
		\mathbf{z}_k = \argmin_{\mathbf{z} \in \mathbb{R}^{2k}} \norm{\beta \mathbf{e}_1 + \gamma \mathbf{e}_2 - \mathbf{S}_{k+1, k} \mathbf{z}}.
	\end{equation} Here $ \beta $ and $ \gamma $ are determined by \Cref{alg:pfreehess}. 
	It follows from \eqref{eq:KW} that the corresponding residual 
	\[
		\mathbf{r}_k = 
		\begin{bmatrix}
			\mathbf{b} \\ \mathbf{c}
		\end{bmatrix} - 
		\begin{bmatrix}
			\lambda \mathbf{I} & \mathbf{A} \\
			\mathbf{B} & \mu \mathbf{I}
		\end{bmatrix}
		\begin{bmatrix}
			\mathbf{x}_k \\ \mathbf{y}_k
		\end{bmatrix} 
		= \mathbf{g} - \mathbf{K} \mathbf{W}_k \mathbf{z}_k 
		= \mathbf{W}_{k+1} (\beta \mathbf{e}_1 + \gamma \mathbf{e}_2 - \mathbf{S}_{k+1, k} \mathbf{z}_k).
	\]
 Since $ \mathbf{W}_k $ is not orthogonal, GP-CMRH is a quasi-minimal residual method. 
 
 The minimization problem \eqref{eq:zk} can be solved via the QR factorization of $ \mathbf{S}_{k+1, k} $. Considering the block upper Hessenberg structure of $ \mathbf{S}_{k+1, k} $, we use Givens rotations to obtain its QR factorization.  Let
	\[
		\mathbf{S}_{k+1, k} = \mathbf{Q}_k
		\begin{bmatrix}
			\mathbf{R}_k \\ \mathbf{0}
		\end{bmatrix}, \quad 
		\mathbf{R}_k \in \mathbb{R}^{2k \times 2k}
	\]
	be the reduced QR factorization, where 
	\[
		\mathbf{Q}_k^\top = \mathbf{Q}_{2k-1,2k+2} \cdots \mathbf{Q}_{3,6} \mathbf{Q}_{1,4} \in \mathbb{R}^{(2k+2) \times (2k+2)}.
	\]
	For $i = 1, \dots, k $, the structure of $ \mathbf{Q}_{2i-1, 2i+2} $ is 
	\[
		\mathbf{Q}_{2i-1, 2i+2} = 
		\begin{bmatrix}
			\mathbf{I}_{2i-2} \\
			& \times & \times & \times & \times \\			
			& \times & \times & \times & \times \\
			& \times & \times & \times & \times \\
			& \times & \times & \times & \times \\
			&        &		  &        &        & \mathbf{I}_{2k-2i}
		\end{bmatrix} \in \mathbb{R}^{(2k+2) \times (2k+2)},
	\]
	in which the $ 4 \times 4 $ block is the product of the following four Givens rotations:
	\[
		\begin{bmatrix}
			1 \\
			& c_{4,i} & s_{4,i} \\
			& -s_{4,i} & c_{4,i} \\
			& & & 1
		\end{bmatrix}
		\begin{bmatrix}
			1 \\
			& c_{3,i} & & s_{3,i} \\
			& & 1 \\
			& -s_{3,i} & & c_{3,i}
		\end{bmatrix}
		\begin{bmatrix}
			c_{2,i} & s_{2,i} \\
			-s_{2,i} & c_{2,i} \\
			& & 1 \\
			& & & 1
		\end{bmatrix}
		\begin{bmatrix}
			c_{1,i} & & & s_{1,i} \\
			        & 1 \\
			        & & 1 \\
			-s_{1,i} & & & c_{1,i}
		\end{bmatrix}.
	\]
	The result $ \begin{bmatrix} y_1 & y_2 & y_3 & y_4 \end{bmatrix}^\top $ of a matrix-vector product between the above four Givens rotations and a vector $ \begin{bmatrix} x_1 & x_2 & x_3 & x_4 \end{bmatrix}^\top $ can be obtained via \Cref{alg:givens}.
	
	\begin{algorithm}[htbp]
%		\setstretch{1.15}
		\caption{Subroutine: \texttt{givens}}
		\label{alg:givens}
		
		\KwIn{$ i $, $ x_1 $, $ x_2 $, $ x_3 $, $ x_4 $ }
		\KwOut{$ y_1 $, $ y_2 $, $ y_3 $, $ y_4 $}
		
		$ t = c_{1,i}x_1 + s_{1,i}x_4 $, $ y_4 = c_{1,i}x_4 - s_{1,i}x_1 $, $ y_1 = t $ \tcc*[r]{First Givens rotation}
		
		$ t = c_{2,i}y_1 + s_{2,i}x_2 $, $ y_2 = c_{2,i}x_2 - s_{2,i}y_1 $, $ y_1 = t $ \tcc*[r]{Second Givens rotation}
		
		$ t = c_{3,i}y_2 + s_{3,i}y_4 $, $ y_4 = c_{3,i}y_4 - s_{3,i}y_2 $, $ y_2 = t $ \tcc*[r]{Third Givens rotation}
		
		$ t = c_{4,i}y_2 + s_{4,i}x_3 $, $ y_3 = c_{4,i}x_3 - s_{4,i}y_2 $, $ y_2 = t $ \tcc*[r]{Forth Givens rotation}
	\end{algorithm} 
	
	At step $ k $, we need to apply previous Givens rotations and compute $ \mathbf{Q}_{2k-1,2k+2} $ to update $ \mathbf{R}_k $, that is,
	\[
		\mathbf{Q}_{2k-1,2k+2} \mathbf{Q}_{k-1}^\top 
		\begin{bmatrix}
			\bm{\Phi}_{1,k} \\ \vdots \\ \bm{\Theta}_{k,k} \\ \bm{\Phi}_{k+1,k}
		\end{bmatrix}
		= \mathbf{Q}_{2k-1,2k+2} 
		\begin{bmatrix}
			r_{1,2 k-1} & r_{1,2 k} \\
			\vdots & \vdots \\
			r_{2 k-2,2 k-1} & r_{2 k-2,2 k} \\
			\widetilde{r}_{2 k-1,2 k-1} & \widetilde{r}_{2 k-1,2 k} \\
			\widetilde{r}_{2 k, 2 k-1} & \widetilde{r}_{2 k, 2 k} \\
			& h_{k+1, k} \\
			f_{k+1,k}
		\end{bmatrix}
		= \begin{bmatrix}
			r_{1,2 k-1} & r_{1,2 k} \\
			\vdots & \vdots \\
			r_{2 k-2,2 k-1} & r_{2 k-2,2 k} \\
			r_{2 k-1,2 k-1} & r_{2 k-1,2 k} \\
			0 & r_{2 k, 2 k} \\
			0 & 0 \\
			0 & 0
		\end{bmatrix}.
	\]
	The implementation details for this update are presented in \Cref{alg:qr}.
	
	\begin{algorithm}[htbp]
		\caption{Subroutine: \texttt{qr}}
		\label{alg:qr}
		
		\KwIn{$ k $, $ \widetilde{r}_{2 k-1,2 k-1}$, $ \widetilde{r}_{2 k-1,2 k} $, $ \widetilde{r}_{2 k, 2 k-1} $, $ \widetilde{r}_{2 k, 2 k} $, $ h_{k+1, k} $, $f_{k+1,k} $}
		\KwOut{$ c_{1,k} $, $ c_{2,k} $, $ c_{3,k} $, $ c_{4,k} $, $ s_{1,k} $, $ s_{2,k} $, $ s_{3,k} $, $ s_{4,k} $, $ r_{2k-1, 2k-1} $, $ r_{2k-1, 2k} $, $ r_{2k, 2k} $}
		
		\tcc*[l]{Compute first Givens rotation, and zero out $ f_{k+1,k} $}
		$ \widehat{r}_{2k-1,2k-1} = \paren{ \widetilde{r}_{2k-1,2k-1}^2 + f_{k+1,k}^2 }^{\frac{1}{2}} $ \;
		$ c_{1,k} = \widetilde{r}_{2k-1,2k-1}/\widehat{r}_{2k-1,2k-1} $, $ s_{1,k} = f_{k+1,k}/\widehat{r}_{2k-1,2k-1} $\;
		$ \widehat{r}_{2k-1,2k} = c_{1,k} \widetilde{r}_{2k-1,2k} $, $ \widehat{r}_{2k+2,2k} = -s_{1,k} \widetilde{r}_{2k-1,2k} $\;
		
		\tcc*[l]{Compute second Givens rotation, and zero out $ \widetilde{r}_{2k,2k-1} $}
		$ r_{2k-1,2k-1} = \paren{ \widehat{r}_{2k-1,2k-1}^2 + \widetilde{r}_{2k,2k-1} }^{\frac{1}{2}} $\; 
		$ c_{2,k} = \widehat{r}_{2k-1,2k-1}/r_{2k-1,2k-1} $, $ s_{2,k } = \widetilde{r}_{2k,2k-1} / r_{2k-1,2k-1} $\;
		$ r_{2k-1, 2k} = c_{2,k}\widehat{r}_{2k-1,2k} + s_{2,k}\widetilde{r}_{2k,2k} $\;
		$ \widehat{r}_{2k,2k} = c_{2,k}\widetilde{r}_{2k,2k} - s_{2,k}\widehat{r}_{2k-1,2k} $\;
		
		\tcc*[l]{Compute third Givens rotation, and zero out $ \widehat{r}_{2k+2, 2k} $}
		$ \mathring{r}_{2k,2k} = \paren{ \widehat{r}_{2k,2k}^2 + \widehat{r}_{2k+2,2k} }^{\frac{1}{2}} $\;
		$ c_{3,k} = \widehat{r}_{2k,2k}/\mathring{r}_{2k,2k} $, $ s_{3,k} = \widehat{r}_{2k+2,2k}/\mathring{r}_{2k,2k} $\;
		
		\tcc*[l]{Compute forth Givens rotation, and zero out $ h_{k+1,k} $}
		$ r_{2k,2k} = \paren{ \mathring{r}_{2k,2k}^2 + h_{k+1,k}^2 }^{\frac{1}{2}} $\;
		$ c_{4,k} = \mathring{r}/r_{2k,2k} $, $ s_{4,k} = h_{k+1,k}/r_{2k,2k} $\;
	\end{algorithm}
	Define
	\[
		\wtilde{\mathbf{t}}_k := \mathbf{Q}_k^\top (\beta \mathbf{e}_1 + \gamma \mathbf{e}_2) = 
		\begin{bmatrix}
			\tau_1 & \cdots & \tau_{2k} & \wtilde{\tau}_{2k+1} & \wtilde{\tau}_{2k+2}
		\end{bmatrix} =:
		\begin{bmatrix}
			\mathbf{t}_k & \wtilde{\tau}_{2k+1} & \wtilde{\tau}_{2k+2}
		\end{bmatrix} \in \mathbb{R}^{2k+2}.
	\]
	Then, the vector $ \mathbf{z}_k $ is given by
	\[
		\mathbf{z}_k = \mathbf{R}_k^{-1} \mathbf{t}_k,
	\]
	and the estimation of the residual norm is 
	\begin{equation} \label{eq:nrk}
		\begin{split}
			\norm{\mathbf{r}_k} &= \norm{\mathbf{W}_{k+1} (\beta \mathbf{e}_1 + \gamma \mathbf{e}_2 - \mathbf{S}_{k+1, k} \mathbf{z}_k)} 
			= \norm[\bigg]{\mathbf{W}_{k+1} \mathbf{Q}_k \biggl(\wtilde{\mathbf{t}}_k - 
			\begin{bmatrix}
				\mathbf{R}_k \\ \mathbf{0}
			\end{bmatrix} \mathbf{z}_k \biggr)} \\
			&\le \norm{\mathbf{W}_{k+1}} \sqrt{\wtilde{\tau}_{2k+1}^2 + \wtilde{\tau}_{2k+2}^2} \\
			&= \max \{\norm{\mathbf{D}_{k+1}}, \norm{\mathbf{L}_{k+1}}\} \sqrt{\wtilde{\tau}_{2k+1}^2 + \wtilde{\tau}_{2k+2}^2}  \\
			&\le \max \{ \norm{\mathbf{D}_{k+1}}_{\rmf}, \norm{\mathbf{L}_{k+1}}_{\rmf} \} \sqrt{\wtilde{\tau}_{2k+1}^2 + \wtilde{\tau}_{2k+2}^2} \\
			&\le \sqrt{\frac{(2\max \{m, n\} - k)(k+1)}{2}} \sqrt{\wtilde{\tau}_{2k+1}^2 + \wtilde{\tau}_{2k+2}^2}.
		\end{split}
	\end{equation}
	The last inequality follows from the sparsity patterns of $ \mathbf{D}_k $ and $ \mathbf{L}_k $, and $ \abs{\mathbf{e}_i^\top \mathbf{D}_k \mathbf{e}_j} \le 1 $ and $ \abs{\mathbf{e}_i^\top \mathbf{L}_k \mathbf{e}_j} \le 1 $ guaranteed by the pivoting strategy. 
	
	We summarize the implementation details of GP-CMRH in \Cref{alg:gpcmrh}.
	
	\begin{algorithm}[htbp]
%		\setstretch{1.15}
		\caption{GP-CMRH}
		\label{alg:gpcmrh}
		
		\KwIn{$ \mathbf{A} \in \mathbb{R}^{m \times n} $, $ \mathbf{B} \in \mathbb{R}^{n \times m} $, nonzero $ \mathbf{b} \in \mathbb{R}^m $ and $ \mathbf{c} \in \mathbb{R}^n $, $ \lambda,\ \mu \in \mathbb{R} $, maximum number of iterations \texttt{maxit}, and tolerance \texttt{tol} }
			
		$ \mathbf{p} = \begin{bmatrix} 1 & 2 & \cdots & m \end{bmatrix}^\top $, $ \mathbf{q} = \begin{bmatrix} 1 & 2 & \cdots & n \end{bmatrix}^\top $\;
			
		Determine $ i_0 $ and $ j_0 $ such that $ \abs{b_{i_0}} = \max_{1 \le i \le m} \abs{\mathbf{e}_i^\top \mathbf{b}} $ and $ \abs{c_{j_0}} = \max_{1 \le j \le n} \abs{\mathbf{e}_j^\top \mathbf{c}} $\;
			
		$ \beta = b_{i_0} $, $ \mathbf{d}_1 = \mathbf{b} / \beta $\;
		$ \gamma = c_{j_0} $, $ \bm{\ell}_1 = \mathbf{c} / \gamma $\;
		$ \mathbf{p}(1) \leftrightharpoons \mathbf{p}(i_0) $, $ \mathbf{q}(1) \leftrightharpoons \mathbf{q}(j_0) $\;
		$ \widetilde{\tau}_{1} = \beta,\ \widetilde{\tau}_{2} = \gamma $\;
			
		\For{$ k = 1, 2, \dots, \mathtt{maxit} $}
		{
			$ \mathbf{d} = \mathbf{A} \bm{\ell}_k $, $ \bm{\ell} = \mathbf{B} \mathbf{d}_k $\;
				
			\For{$ i = 1, 2, \dots, k $}
			{
				$ h_{i, k} = \mathbf{d}(\mathbf{p}(i)) $, $ f_{i, k} = \bm{\ell}(\mathbf{q}(i)) $\;
				$ \mathbf{d} = \mathbf{d} - h_{i, k} \mathbf{d}_i $\;
				$ \bm{\ell} = \bm{\ell} - f_{i, k} \bm{\ell}_i $\;
				Update $ \mathbf{S}_{2i-1, 2k} = h_{i, k} $ and $ \mathbf{S}_{2i, 2k-1} = f_{i, k} $\;
			}
			Determine $ i_0 $ and $ j_0 $ such that $ \abs{d_{i_0}} = \max_{k+1 \le i \le m} \abs{\mathbf{d}(\mathbf{p}(i))} $ and $ \abs{\ell_{j_0}} = \max_{k+1 \le j \le n} \abs{\bm{\ell}(\mathbf{q}(j))} $\;
			$ h_{k+1, k} = d_{i_0} $, $ f_{k+1, k} = \ell_{j_0} $\;
			
			$ \widetilde{r}_{1,2k} = h_{1,k},\ \widetilde{r}_{2,2k-1} = f_{1,k} $\;
			\leIf{$ k = 1$}{$ \widetilde{r}_{1,2k-1} = \lambda,\  \widetilde{r}_{2,2k} = \mu $}{$ \widetilde{r}_{1,2k-1} =  \widetilde{r}_{2,2k} = 0 $}
			
			\For(\tcc*[f]{Apply previous Givens rotations}){$ j = 1,2,\dotsc, k-1 $}
			{
				\leIf{$ j = k-1 $}{$ \rho = \lambda,\ \delta = \mu $}{$ \rho = \delta = 0 $}
				
				$ r_{2j-1,2k-1},\ r_{2j,2k-1},\ \widetilde{r}_{2j+1,2k-1},\ \widetilde{r}_{2j+2,2k-1} = \mathtt{givens}(j, \widetilde{r}_{2j-1,2k-1}, \widetilde{r}_{2j,2k-1}, \rho, f_{j+1,k}) $\;
				
				$ r_{2j-1,2k},\ r_{2j,2k},\ \widetilde{r}_{2j+1,2k},\ \widetilde{r}_{2j+2,2k} = \mathtt{givens}(j, \widetilde{r}_{2j-1,2k}, \widetilde{r}_{2j,2k}, h_{j+1,k}, \delta) $\;
			}
			
			\tcc*[r]{Compute $ Q_{2k-1,2k+2} $ to update $ \mathbf{R}_k $}
			$ c_{1,k} $, $ c_{2,k} $, $ c_{3,k} $, $ c_{4,k} $, $ s_{1,k} $, $ s_{2,k} $, $ s_{3,k} $, $ s_{4,k} $, $ r_{2k-1, 2k-1} $, $ r_{2k-1, 2k} $, $ r_{2k, 2k} = \mathtt{qr}(k, \widetilde{r}_{2 k-1,2 k-1}, \widetilde{r}_{2 k-1,2 k}, \widetilde{r}_{2 k, 2 k-1}, \widetilde{r}_{2 k, 2 k}, h_{k+1, k}, f_{k+1,k}) $\;
			
			$ \tau_{2k-1},\ \tau_{2k},\ \widetilde{\tau}_{2k+1},\ \widetilde{\tau}_{2k+2} = \mathtt{givens}(\widetilde{\tau}_{2k-1},\ \widetilde{\tau}_{2k}, 0, 0) $ \tcc*[r]{Update $ \mathbf{t}_k $}
			
			Compute estimation of residual norm $ \rho_k $ via \eqref{eq:nrk}\;
				
			\If{$ \rho_k \le \mathtt{tol} $}
			{
					\textbf{break}\;
			}
				
			$ \mathbf{d}_{k+1} = \mathbf{d} / h_{k+1, k} $, $ \bm{\ell}_{k+1} = \bm{\ell} / f_{k+1, k} $\;
			$ \mathbf{p}(k+1) \leftrightharpoons \mathbf{p}(i_0) $, $ \mathbf{q}(k+1) \leftrightharpoons \mathbf{q}(j_0) $\;
		}
			
		$ \mathbf{z}_k = \mathbf{R}_k^{-1} \mathbf{t}_k $, $ \mathbf{x}_k = \sum_{i=1}^{k} \mathbf{e}_{2i-1}^\top \mathbf{z}_k \mathbf{d}_i $, $ \mathbf{y}_k = \sum_{i=1}^{k} \mathbf{e}_{2i}^\top \mathbf{z}_k \bm{\ell}_i $\;
	\end{algorithm}
	
	\subsection{Relation between GP-CMRH and GPMR}
	For the sake of clarity, the superscripts $ \cdot^\mathrm{GP\mbox{\footnotesize -}CMRH} $ and $ \cdot^\mathrm{GPMR} $ are corresponding to GP-CMRH and GPMR, respectively. GPMR seeks the approximate solution $ \begin{bmatrix}
	\mathbf{x}_k^\mathrm{GPMR} \\ \mathbf{y}_k^\mathrm{GPMR}
	\end{bmatrix} \in \range(\mathbf{W}_k) $ that minimizes the norm of the corresponding residual. More precisely,
    \[
    	{\renewcommand{\arraystretch}{1.12}
    	\begin{bmatrix}
    		\mathbf{x}_k^\mathrm{GPMR} \\ \mathbf{y}_k^\mathrm{GPMR}
    	\end{bmatrix} = \mathbf{W}_k \mathbf{z}_k^\mathrm{GPMR},}
    \]
    and $ \mathbf{z}_k^\mathrm{GPMR} \in\mbbr^{2k}$ solves 
    \[
		\min_{\mathbf{z} \in \mathbb{R}^{2k}} \norm{\mathbf{g} - \mathbf{K} \mathbf{W}_k \mathbf{z}} = \min_{\mathbf{z} \in \mathbb{R}^{2k}} \norm{\mathbf{W}_{k+1} (\beta \mathbf{e}_1 + \gamma \mathbf{e}_2 - \mathbf{S}_{k+1, k} \mathbf{z})}.
    \]
	We have the following result.
	
	\begin{theorem} \label{thm:rk_gpchmr_gpmr}
		Let $ \mathbf{r}_k^\mathrm{GP\mbox{\footnotesize -}CMRH} $ and $ \mathbf{r}_k^\mathrm{GPMR} $ be the $k$th residuals of {\rm GP-CMRH} and {\rm GPMR}, respectively. Then,
		\[
			\norm{\mathbf{r}_k^\mathrm{GPMR}} \le \norm{\mathbf{r}_k^\mathrm{GP\mbox{\footnotesize -}CMRH}} \le \kappa(\mathbf{W}_{k+1}) \norm{\mathbf{r}_k^\mathrm{GPMR}},
 		\]
 		where $ \kappa(\mathbf{W}_{k+1}) = \norm{\mathbf{W}_{k+1}} \|\mathbf{W}_{k+1}^\dagger\| $ is the condition number of $ \mathbf{W}_{k+1} $.
	\end{theorem}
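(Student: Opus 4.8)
The plan is to exploit that GPMR and GP-CMRH search over the \emph{same} space $\range(\mathbf{W}_k)$, with GPMR minimizing the true residual norm and GP-CMRH minimizing the quasi-residual norm, and then to pass between the two quantities through the (full column rank) matrix $\mathbf{W}_{k+1}$. First I would fix notation: for $\mathbf{z}\in\mathbb{R}^{2k}$ write the quasi-residual $\mathbf{q}_k(\mathbf{z}):=\beta\mathbf{e}_1+\gamma\mathbf{e}_2-\mathbf{S}_{k+1,k}\mathbf{z}\in\mathbb{R}^{2k+2}$, so that by \eqref{eq:KW} the true residual associated with the iterate $\mathbf{W}_k\mathbf{z}$ is $\mathbf{W}_{k+1}\mathbf{q}_k(\mathbf{z})$. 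With $\mathbf{z}_k^{\mathrm{GP\mbox{\footnotesize -}CMRH}}$ solving \eqref{eq:zk} and $\mathbf{z}_k^{\mathrm{GPMR}}$ the GPMR coordinate vector, we then have $\mathbf{r}_k^{\mathrm{GP\mbox{\footnotesize -}CMRH}}=\mathbf{W}_{k+1}\mathbf{q}_k(\mathbf{z}_k^{\mathrm{GP\mbox{\footnotesize -}CMRH}})$ and $\mathbf{r}_k^{\mathrm{GPMR}}=\mathbf{W}_{k+1}\mathbf{q}_k(\mathbf{z}_k^{\mathrm{GPMR}})$, while $\mathbf{z}_k^{\mathrm{GPMR}}$ minimizes $\norm{\mathbf{W}_{k+1}\mathbf{q}_k(\mathbf{z})}$ and $\mathbf{z}_k^{\mathrm{GP\mbox{\footnotesize -}CMRH}}$ minimizes $\norm{\mathbf{q}_k(\mathbf{z})}$ over $\mathbf{z}\in\mathbb{R}^{2k}$.

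The lower bound is then immediate: since $\mathbf{z}_k^{\mathrm{GP\mbox{\footnotesize -}CMRH}}$ is feasible for the GPMR least-squares problem and $\mathbf{r}_k^{\mathrm{GPMR}}$ realizes its minimum, $\norm{\mathbf{r}_k^{\mathrm{GPMR}}}\le\norm{\mathbf{W}_{k+1}\mathbf{q}_k(\mathbf{z}_k^{\mathrm{GP\mbox{\footnotesize -}CMRH}})}=\norm{\mathbf{r}_k^{\mathrm{GP\mbox{\footnotesize -}CMRH}}}$.

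For the upper bound I would chain three estimates. Submultiplicativity gives $\norm{\mathbf{r}_k^{\mathrm{GP\mbox{\footnotesize -}CMRH}}}=\norm{\mathbf{W}_{k+1}\mathbf{q}_k(\mathbf{z}_k^{\mathrm{GP\mbox{\footnotesize -}CMRH}})}\le\norm{\mathbf{W}_{k+1}}\,\norm{\mathbf{q}_k(\mathbf{z}_k^{\mathrm{GP\mbox{\footnotesize -}CMRH}})}$. Optimality of GP-CMRH for the quasi-residual gives $\norm{\mathbf{q}_k(\mathbf{z}_k^{\mathrm{GP\mbox{\footnotesize -}CMRH}})}\le\norm{\mathbf{q}_k(\mathbf{z}_k^{\mathrm{GPMR}})}$. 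The remaining step is to bound $\norm{\mathbf{q}_k(\mathbf{z}_k^{\mathrm{GPMR}})}$ by $\norm{\mathbf{W}_{k+1}^\dagger}\,\norm{\mathbf{r}_k^{\mathrm{GPMR}}}$; for this I use that $\mathbf{W}_{k+1}$ has full column rank $2k+2$. Indeed, up to the row permutations $\mathbf{P}_{k+1}$ and $\mathbf{Q}_{k+1}$ the factors $\mathbf{D}_{k+1}$ and $\mathbf{L}_{k+1}$ are unit lower trapezoidal, hence of full column rank $k+1$ (their leading $(k+1)\times(k+1)$ blocks are unit lower triangular and nonsingular), and $\mathbf{W}_{k+1}$ is the block-diagonal direct sum of these two composed with the permutation $\bm{\Pi}_{k+1}$ in \eqref{eq:Wk}. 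Therefore $\mathbf{W}_{k+1}^\dagger\mathbf{W}_{k+1}=\mathbf{I}_{2k+2}$, so $\mathbf{q}_k(\mathbf{z}_k^{\mathrm{GPMR}})=\mathbf{W}_{k+1}^\dagger\mathbf{W}_{k+1}\mathbf{q}_k(\mathbf{z}_k^{\mathrm{GPMR}})=\mathbf{W}_{k+1}^\dagger\mathbf{r}_k^{\mathrm{GPMR}}$ and $\norm{\mathbf{q}_k(\mathbf{z}_k^{\mathrm{GPMR}})}\le\norm{\mathbf{W}_{k+1}^\dagger}\,\norm{\mathbf{r}_k^{\mathrm{GPMR}}}$. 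Multiplying the three inequalities yields $\norm{\mathbf{r}_k^{\mathrm{GP\mbox{\footnotesize -}CMRH}}}\le\norm{\mathbf{W}_{k+1}}\,\norm{\mathbf{W}_{k+1}^\dagger}\,\norm{\mathbf{r}_k^{\mathrm{GPMR}}}=\kappa(\mathbf{W}_{k+1})\norm{\mathbf{r}_k^{\mathrm{GPMR}}}$.

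The only genuinely delicate point is the full-column-rank claim for $\mathbf{W}_{k+1}$, which is what makes $\mathbf{W}_{k+1}^\dagger\mathbf{W}_{k+1}=\mathbf{I}$ valid; it holds provided \Cref{alg:pfreehess} has not terminated before producing $\mathbf{D}_{k+1}$ and $\mathbf{L}_{k+1}$ (otherwise GP-CMRH would already have the exact or a degenerate situation), which is implicit in speaking of the $k$th iterates. Everything else is a routine combination of the two optimality properties with submultiplicative norm bounds, exactly paralleling the classical comparison of CMRH (or QMR) with GMRES.
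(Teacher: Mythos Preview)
Your proposal is correct and follows essentially the same route as the paper: the lower bound by GPMR's optimality over the common search space $\range(\mathbf{W}_k)$, and the upper bound by the chain $\norm{\mathbf{W}_{k+1}\,\cdot\,}\le\norm{\mathbf{W}_{k+1}}\norm{\,\cdot\,}$, GP-CMRH's optimality for the quasi-residual, and then inserting $\mathbf{W}_{k+1}^\dagger\mathbf{W}_{k+1}=\mathbf{I}$ to recover $\mathbf{r}_k^{\mathrm{GPMR}}$. Your explicit justification of the full column rank of $\mathbf{W}_{k+1}$ (via the unit lower trapezoidal structure of $\mathbf{P}_{k+1}\mathbf{D}_{k+1}$ and $\mathbf{Q}_{k+1}\mathbf{L}_{k+1}$) is a detail the paper leaves implicit but is otherwise the same argument.
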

	
	\begin{proof}
		Since the iterates of GP-CMRH and GPMR are in the same subspace and GPMR minimizes the corresponding residual norm, we have $ \norm{\mathbf{r}_k^\mathrm{GPMR}} \le \norm{\mathbf{r}_k^\mathrm{GP\mbox{\footnotesize -}CMRH}} $. We now prove the right inequality. From the optimality conditions of GP-CMRH and GPMR, we have 
		\begin{align*}
			\norm{\mathbf{r}_k^\mathrm{GP\mbox{\footnotesize -}CMRH}} &= \norm{\mathbf{W}_{k+1} \bigl(\beta \mathbf{e}_1 + \gamma \mathbf{e}_2 - \mathbf{S}_{k+1, k} \mathbf{z}_k^\mathrm{GP\mbox{\footnotesize -}CMRH} \bigr)} \\
			&\le \norm{\mathbf{W}_{k+1}} \norm{\beta \mathbf{e}_1 + \gamma \mathbf{e}_2 - \mathbf{S}_{k+1, k} \mathbf{z}_k^\mathrm{GP\mbox{\footnotesize -}CMRH}} \\
			&\le \norm{\mathbf{W}_{k+1}} \norm{\beta \mathbf{e}_1 + \gamma \mathbf{e}_2 - \mathbf{S}_{k+1, k} \mathbf{z}_k^\mathrm{GPMR}} \\
			&= \norm{\mathbf{W}_{k+1}} \norm{\mathbf{W}_{k+1}^\dagger \mathbf{W}_{k+1} (\beta \mathbf{e}_1 + \gamma \mathbf{e}_2 - \mathbf{S}_{k+1, k} \mathbf{z}_k^\mathrm{GPMR})} \\
			&\le \norm{\mathbf{W}_{k+1}} \lVert \mathbf{W}_{k+1}^\dagger \rVert \norm{\mathbf{r}_k^\mathrm{GPMR}} \\
			&= \kappa(\mathbf{W}_{k+1}) \norm{\mathbf{r}_k^\mathrm{GPMR}}. \qedhere
		\end{align*}
	\end{proof}
	
	\section{Numerical experiments} \label{sec:exp}
	
	In this section, we compare the numerical performances among the four iterative methods GPMR, GP-CMRH, GMRES, and CMRH. All the algorithms stop as soon as reaching the maximum number of iterations \texttt{maxit} or the relative residual norm $ \lVert \mathbf{r}_k \rVert / \lVert \mathbf{r}_0 \rVert \le \mathtt{tol} $. All experiments are performed using MATLAB R2024b on MacBook Air with Apple M3 chip, 16 GB memory, and macOS Sequoia 15.4.1. For all experiments, the initial guess is set to be the zero vector, and the right-hand side vector is generated so that the exact solution is the vector of ones. 
	
	We use some nonsymmetric matrices from the SuitSparse Matrix Collection \cite{davis2011university} to construct system \eqref{eq:gpsys} by utilizing the graph partitioning tool METIS, and apply the block diagonal right-preconditioner $ \diag(\mathbf{M}, \mathbf{N}) $ in the four iterative methods.
	We set the parameters $ \mathtt{tol} = 1 \times 10^{-10} $, and $ \mathtt{maxit} = 600 $. 
	We plot the convergence histories of problems {\tt powersim},  {\tt c-53}, {\tt copter2}, {\tt Goodwin\_071}, {\tt venkat50}, {\tt PR02R}, {\tt cont-300}, and {\tt thermomech\_dK} in \Cref{fig:real_data}. We report the numbers of iterations, the runtimes, and the relative residual norms of the four methods in \Cref{tab:real_data}. We have the following observations. \bit
	\item[(i)] The convergence behavior of GP-CMRH is comparable to that of GPMR in terms of the number of iterations required to reach the specified relative residual tolerance.
	\item[(ii)] In most cases, GP-CMRH requires less runtimes compared to GPMR. 
	\item[(iii)] GP-CMRH performs significantly better than GMRES and CMRH in terms of convergence rate and runtime efficiency.
	\eit

\section{Concluding remarks} \label{sec:conclusion}
	
	In this paper, we have proposed a new simultaneous Hessenberg process that reduces two rectangular matrices to upper Hessenberg form simultaneously, without employing inner products. Based on the simultaneous Hessenberg process, we have proposed the inner product free iterative method GP-CMRH for solving block two-by-two nonsymmetric linear systems. Our numerical experiments show that the convergence behavior of GP-CMRH is similar to that of GPMR, whereas requires less computational time in most cases. We emphasize that GP-CMRH is inner product free, and this feature may be useful for high performance computing and low or mixed precision arithmetic.
	
	Recently, some state-of-the-art methods apply randomized sketching techniques to accelerate standard Krylov subspace methods; see, e.g., \cite{balabanov2022randomized,guettel2023randomized,nakatsukasa2024fast,guttel2024sketc,cortinovis2024speed,palitta2025sketched, burke2025gmres,balabanov2025rando,jang2025rando}. Randomized variants of GP-CMRH will be a topic of the future work.

\begin{figure}[H]
\centerline{\epsfig{figure=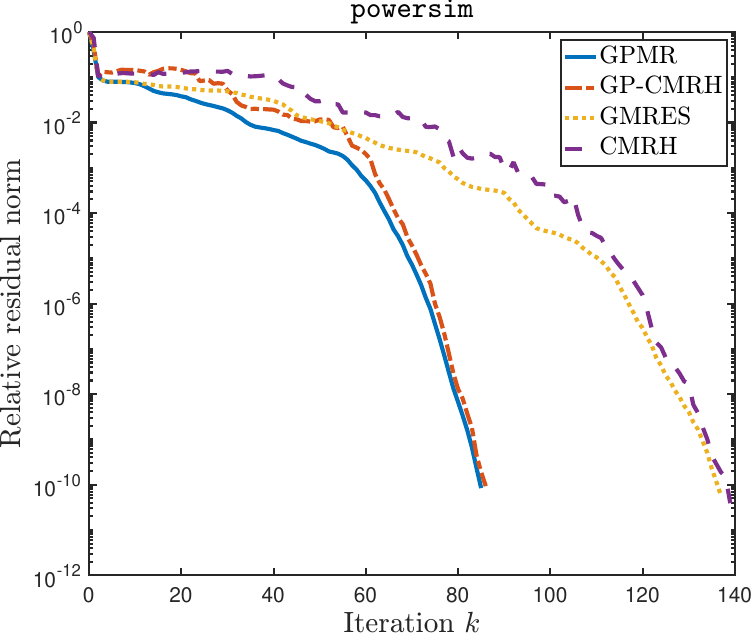,height=2.25in}\qquad\epsfig{figure=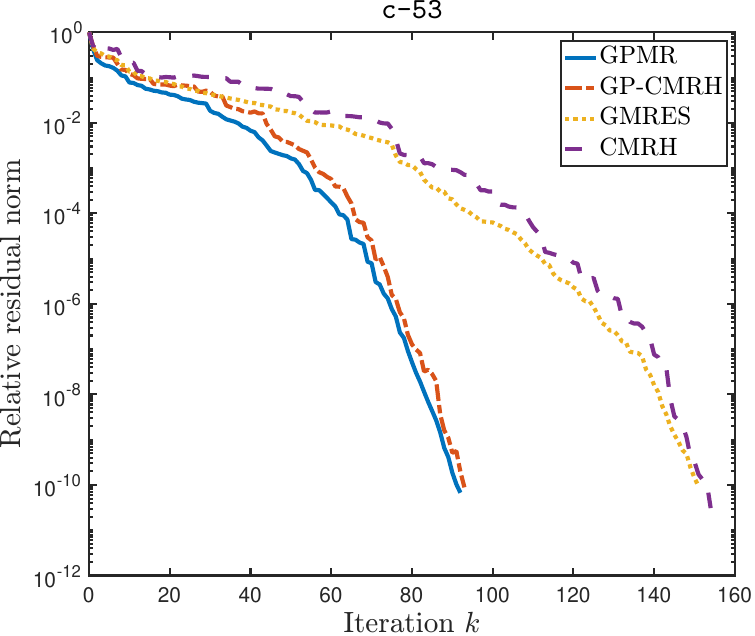,height=2.25in}}\vspace{3mm}

\centerline{\epsfig{figure=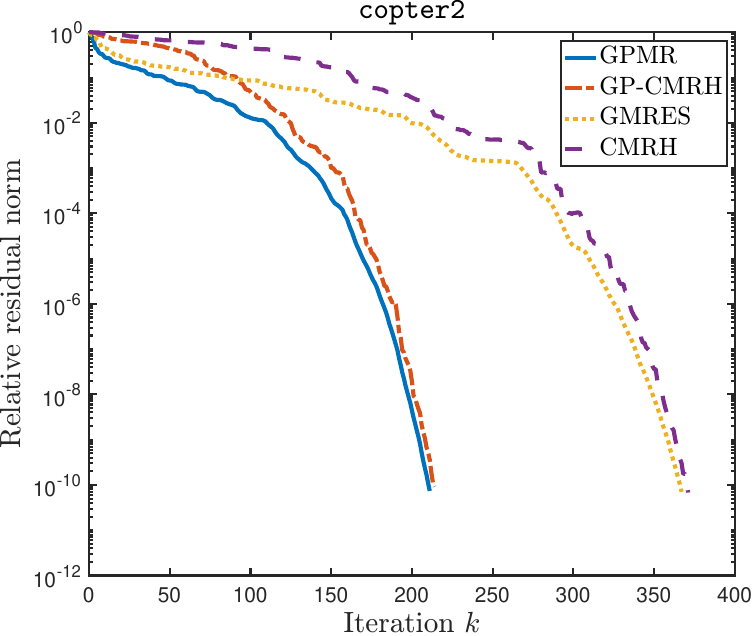,height=2.25in}\qquad\epsfig{figure=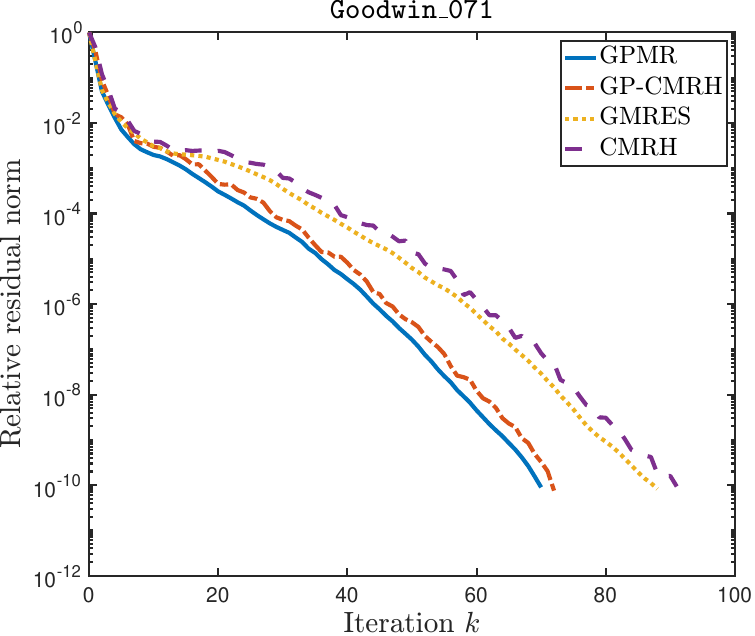,height=2.25in}}\vspace{3mm}

\centerline{\epsfig{figure=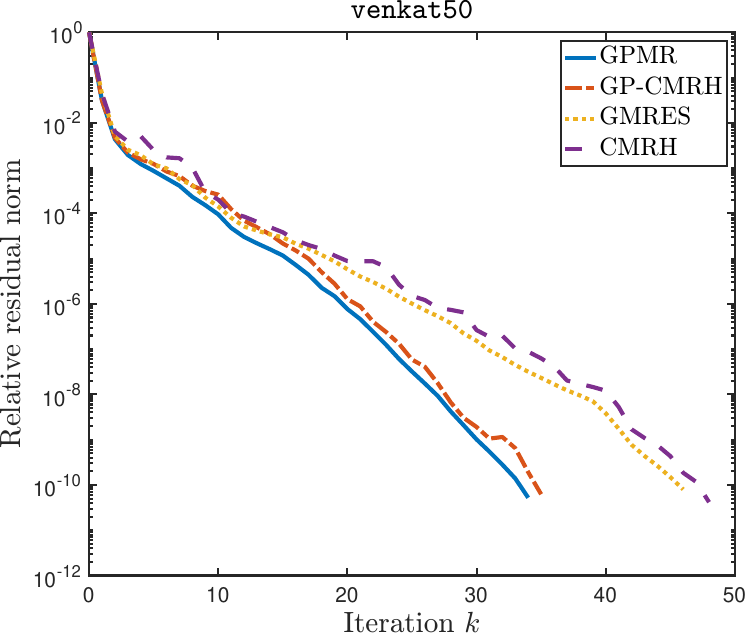,height=2.25in}\qquad\epsfig{figure=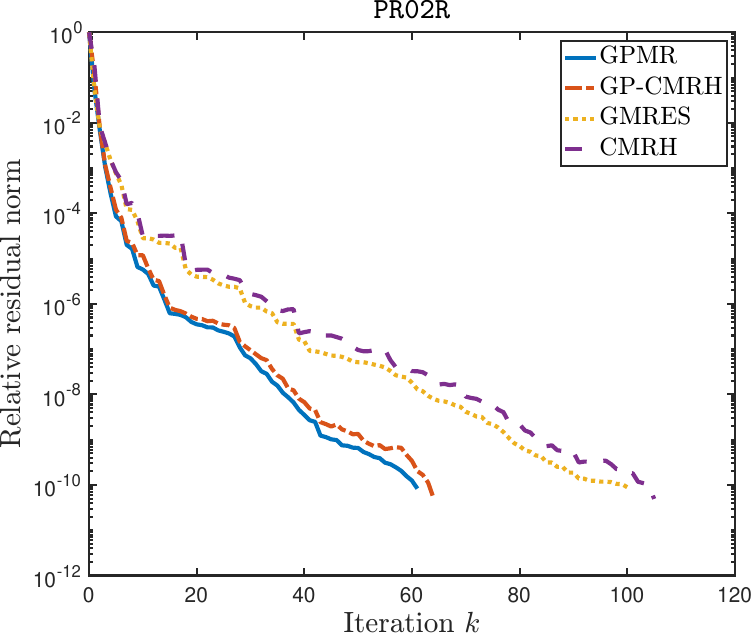,height=2.25in}}\vspace{3mm}

\centerline{\epsfig{figure=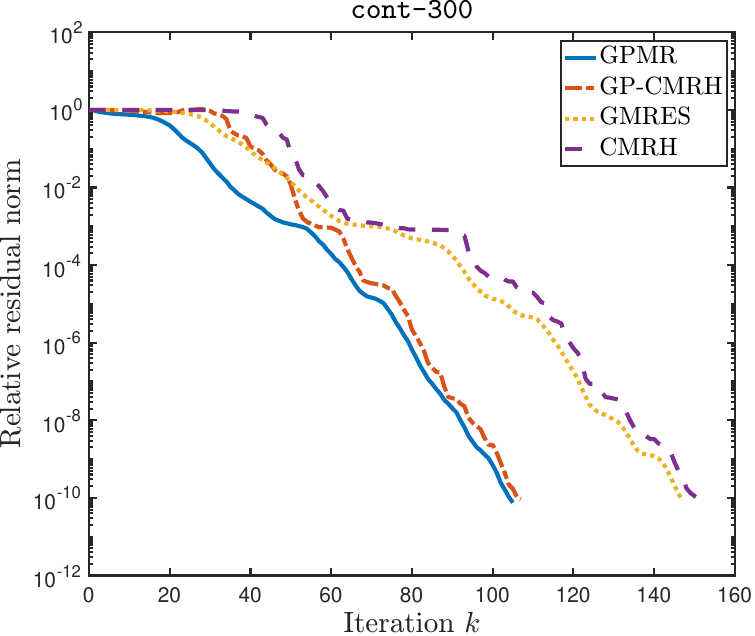,height=2.25in}\qquad\epsfig{figure=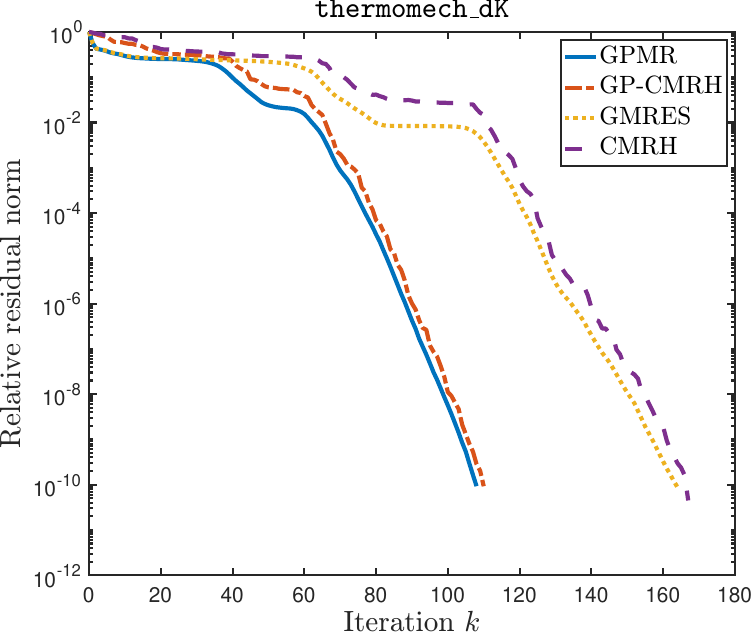,height=2.25in}}\caption{Convergence histories of GPMR, GP-CMRH, GMRES, and CMRH on problems {\tt powersim},  {\tt c-53}, {\tt copter2}, {\tt Goodwin\_071}, {\tt venkat50}, {\tt PR02R}, {\tt cont-300}, and {\tt thermomech\_dK}.}
		\label{fig:real_data}
	\end{figure}	
	
	\begin{landscape}
		\begin{table}
			\centering
			\caption{Numbers of iterations (Iter), runtimes (Time), and relative residual norms (Rel) of GPMR, GP-CMRH, GMRES, and CMRH on twenty-two matrices from the SuitSparse Matrix Collection. ``Nnz'' denotes the number of nonzero elements in each sparse matrix. Bold-faced values in the runtime column highlight the shortest time taken among the four methods.\vspace{4mm}}
			
			\label{tab:real_data}
			\begin{tabular}{|c|c|c|ccc|ccc|ccc|ccc|}
				\hline
				\multirow{2}{*}{Name} & \multirow{2}{*}{Size} & \multirow{2}{*}{Nnz} & \multicolumn{3}{c|}{GPMR} & \multicolumn{3}{c|}{GP-CMRH} & \multicolumn{3}{c|}{GMRES} & \multicolumn{3}{c|}{CMRH} \\ 
				\cline{4-15} 
				&  &  & \multicolumn{1}{c|}{Iter} & \multicolumn{1}{c|}{Time} & Rel & \multicolumn{1}{c|}{Iter} & \multicolumn{1}{c|}{Time} & Rel & \multicolumn{1}{c|}{Iter} & \multicolumn{1}{c|}{Time} & Rel & \multicolumn{1}{c|}{Iter} & \multicolumn{1}{c|}{Time} & Rel \\ 
				\hline
				\texttt{bcsstk17} & 10974 & 428650 & \multicolumn{1}{c|}{121} & \multicolumn{1}{c|}{0.39} & 3.66e-11 & \multicolumn{1}{c|}{121} & \multicolumn{1}{c|}{\textbf{0.28}} & 8.51e-11 & \multicolumn{1}{c|}{213} & \multicolumn{1}{c|}{1.57} & 8.17e-11 & \multicolumn{1}{c|}{216} & \multicolumn{1}{c|}{0.72} & 8.64e-11 \\ \hline
				\texttt{bcsstk25} & 15439 & 252241 & \multicolumn{1}{c|}{62} & \multicolumn{1}{c|}{0.18} & 5.54e-11 & \multicolumn{1}{c|}{63} & \multicolumn{1}{c|}{\textbf{0.15}} & 8.63e-11 & \multicolumn{1}{c|}{100} & \multicolumn{1}{c|}{0.47} & 8.13e-11 & \multicolumn{1}{c|}{116} & \multicolumn{1}{c|}{0.37} & 8.41e-11 \\ \hline
				\texttt{powersim} & 15838 & 64424 & \multicolumn{1}{c|}{85} & \multicolumn{1}{c|}{0.26} & 8.40e-11 & \multicolumn{1}{c|}{86} & \multicolumn{1}{c|}{\textbf{0.20}} & 9.28e-11 & \multicolumn{1}{c|}{137} & \multicolumn{1}{c|}{1.17} & 5.64e-11 & \multicolumn{1}{c|}{139} & \multicolumn{1}{c|}{0.42} & 3.91e-11 \\ \hline
				\texttt{raefsky3} & 21200 & 1488768 & \multicolumn{1}{c|}{37} & \multicolumn{1}{c|}{\textbf{0.68}} & 9.56e-11 & \multicolumn{1}{c|}{40} & \multicolumn{1}{c|}{0.69} & 8.86e-11 & \multicolumn{1}{c|}{63} & \multicolumn{1}{c|}{1.24} & 8.59e-11 & \multicolumn{1}{c|}{67} & \multicolumn{1}{c|}{1.15} & 9.28e-11 \\ \hline
				\texttt{sme3Db} & 29067 & 2081063 & \multicolumn{1}{c|}{65} & \multicolumn{1}{c|}{2.42} & 6.60e-11 & \multicolumn{1}{c|}{66} & \multicolumn{1}{c|}{\textbf{2.23}} & 7.30e-11 & \multicolumn{1}{c|}{97} & \multicolumn{1}{c|}{4.65} & 6.30e-11 & \multicolumn{1}{c|}{98} & \multicolumn{1}{c|}{3.27} & 9.40e-11 \\ \hline
				\texttt{c-53} & 30235 & 355139 & \multicolumn{1}{c|}{92} & \multicolumn{1}{c|}{1.38} & 6.73e-11 & \multicolumn{1}{c|}{93} & \multicolumn{1}{c|}{\textbf{1.21}} & 8.85e-11 & \multicolumn{1}{c|}{151} & \multicolumn{1}{c|}{3.29} & 9.34e-11 & \multicolumn{1}{c|}{154} & \multicolumn{1}{c|}{2.17} & 3.08e-11 \\ \hline
				\texttt{sme3Dc} & 42930 & 3148656 & \multicolumn{1}{c|}{97} & \multicolumn{1}{c|}{5.72} & 6.12e-11 & \multicolumn{1}{c|}{98} & \multicolumn{1}{c|}{\textbf{5.36}} & 7.67e-11 & \multicolumn{1}{c|}{161} & \multicolumn{1}{c|}{11.05} & 7.39e-11 & \multicolumn{1}{c|}{163} & \multicolumn{1}{c|}{8.75} & 6.85e-11 \\ \hline
				\texttt{bcsstk39} & 46772 & 2060662 & \multicolumn{1}{c|}{205} & \multicolumn{1}{c|}{5.72} & 7.54e-11 & \multicolumn{1}{c|}{209} & \multicolumn{1}{c|}{\textbf{3.25}} & 7.33e-11 & \multicolumn{1}{c|}{381} & \multicolumn{1}{c|}{23.32} & 9.73e-11 & \multicolumn{1}{c|}{392} & \multicolumn{1}{c|}{5.39} & 9.50e-11 \\ \hline
				{\tt rma10} & 46835 & 2329092 & \multicolumn{1}{c|}{41} & \multicolumn{1}{c|}{1.43} & 6.39e-11 & \multicolumn{1}{c|}{42} & \multicolumn{1}{c|}{\textbf{1.33}} & 6.34e-11 & \multicolumn{1}{c|}{49} & \multicolumn{1}{c|}{1.69} & 7.02e-11 & \multicolumn{1}{c|}{51} & \multicolumn{1}{c|}{1.53} & 5.08e-11 \\ \hline
				{\tt copter2} & 55476 & 759952 & \multicolumn{1}{c|}{211} & \multicolumn{1}{c|}{19.86} & 7.38e-11 & \multicolumn{1}{c|}{214} & \multicolumn{1}{c|}{\textbf{16.11}} & 9.18e-11 & \multicolumn{1}{c|}{367} & \multicolumn{1}{c|}{50.06} & 7.04e-11 & \multicolumn{1}{c|}{371} & \multicolumn{1}{c|}{27.06} & 6.81e-11 \\ \hline
				\texttt{Goodwin\_071} & 56021 & 1797934 & \multicolumn{1}{c|}{70} & \multicolumn{1}{c|}{2.77} & 8.93e-11 & \multicolumn{1}{c|}{72} & \multicolumn{1}{c|}{\textbf{2.39}} & 7.56e-11 & \multicolumn{1}{c|}{88} & \multicolumn{1}{c|}{4.24} & 8.20e-11 & \multicolumn{1}{c|}{91} & \multicolumn{1}{c|}{2.94} & 9.34e-11 \\ \hline
				\texttt{water\_tank} & 60740 & 2035281 & \multicolumn{1}{c|}{324} & \multicolumn{1}{c|}{46.00} & 8.07e-11 & \multicolumn{1}{c|}{338} & \multicolumn{1}{c|}{\textbf{35.09}} & 7.20e-11 & \multicolumn{1}{c|}{430} & \multicolumn{1}{c|}{75.59} & 9.73e-11 & \multicolumn{1}{c|}{464} & \multicolumn{1}{c|}{51.15} & 8.34e-11 \\ \hline
				\texttt{venkat50} & 62424 & 1717777 & \multicolumn{1}{c|}{34} & \multicolumn{1}{c|}{1.06} & 5.23e-11 & \multicolumn{1}{c|}{35} & \multicolumn{1}{c|}{\textbf{0.97}} & 6.24e-11 & \multicolumn{1}{c|}{46} & \multicolumn{1}{c|}{1.66} & 7.99e-11 & \multicolumn{1}{c|}{48} & \multicolumn{1}{c|}{1.29} & 4.17e-11 \\ \hline
				\texttt{poisson3Db} & 85623 & 2374949 & \multicolumn{1}{c|}{50} & \multicolumn{1}{c|}{\textbf{7.57}} & 6.94e-11 & \multicolumn{1}{c|}{51} & \multicolumn{1}{c|}{7.64} & 8.84e-11 & \multicolumn{1}{c|}{57} & \multicolumn{1}{c|}{8.76} & 6.66e-11 & \multicolumn{1}{c|}{59} & \multicolumn{1}{c|}{9.00} & 5.97e-11 \\ \hline
				\texttt{ifiss\_mat} & 96307 & 3599932 & \multicolumn{1}{c|}{33} & \multicolumn{1}{c|}{\textbf{2.27}} & 8.76e-11 & \multicolumn{1}{c|}{35} & \multicolumn{1}{c|}{2.32} & 3.38e-11 & \multicolumn{1}{c|}{42} & \multicolumn{1}{c|}{3.03} & 7.08e-11 & \multicolumn{1}{c|}{43} & \multicolumn{1}{c|}{2.73} & 9.70e-11 \\ \hline
				\texttt{hcircuit} & 105676 & 513072 & \multicolumn{1}{c|}{46} & \multicolumn{1}{c|}{0.80} & 9.69e-11 & \multicolumn{1}{c|}{46} & \multicolumn{1}{c|}{\textbf{0.38}} & 7.84e-11 & \multicolumn{1}{c|}{58} & \multicolumn{1}{c|}{1.44} & 4.99e-11 & \multicolumn{1}{c|}{58} & \multicolumn{1}{c|}{0.49} & 8.66e-11 \\ \hline
				\texttt{PR02R} & 161070 & 8185136 & \multicolumn{1}{c|}{61} & \multicolumn{1}{c|}{\textbf{25.13}} & 8.31e-11 & \multicolumn{1}{c|}{64} & \multicolumn{1}{c|}{26.39} & 5.15e-11 & \multicolumn{1}{c|}{100} & \multicolumn{1}{c|}{42.66} & 8.91e-11 & \multicolumn{1}{c|}{105} & \multicolumn{1}{c|}{46.23} & 4.92e-11 \\ \hline
				\texttt{cont-300} & 180895 & 988195 & \multicolumn{1}{c|}{105} & \multicolumn{1}{c|}{24.34} & 7.55e-11 & \multicolumn{1}{c|}{107} & \multicolumn{1}{c|}{\textbf{21.57}} & 9.34e-11 & \multicolumn{1}{c|}{147} & \multicolumn{1}{c|}{39.66} & 8.68e-11 & \multicolumn{1}{c|}{151} & \multicolumn{1}{c|}{32.55} & 9.49e-11 \\ \hline
				\texttt{thermomech\_dK} & 204316 & 2846228 & \multicolumn{1}{c|}{108} & \multicolumn{1}{c|}{14.06} & 9.26e-11 & \multicolumn{1}{c|}{110} & \multicolumn{1}{c|}{\textbf{8.59}} & 9.20e-11 & \multicolumn{1}{c|}{164} & \multicolumn{1}{c|}{27.30} & 8.81e-11 & \multicolumn{1}{c|}{167} & \multicolumn{1}{c|}{13.53} & 4.48e-11 \\ \hline
				\texttt{pwtk} & 217918 & 11524432 & \multicolumn{1}{c|}{190} & \multicolumn{1}{c|}{28.61} & 8.36e-11 & \multicolumn{1}{c|}{197} & \multicolumn{1}{c|}{\textbf{13.83}} & 8.64e-11 & \multicolumn{1}{c|}{283} & \multicolumn{1}{c|}{56.32} & 9.94e-11 & \multicolumn{1}{c|}{292} & \multicolumn{1}{c|}{21.83} & 7.55e-11 \\ \hline
				\texttt{Raj1} & 263743 & 1300261 & \multicolumn{1}{c|}{361} & \multicolumn{1}{c|}{103.21} & 9.79e-11 & \multicolumn{1}{c|}{398} & \multicolumn{1}{c|}{\textbf{80.74}} & 9.87e-11 & \multicolumn{1}{c|}{532} & \multicolumn{1}{c|}{239.82} & 9.91e-11 & \multicolumn{1}{c|}{567} & \multicolumn{1}{c|}{89.36} & 9.71e-11 \\ \hline
				\texttt{nxp1} & 414604 & 2655880 & \multicolumn{1}{c|}{105} & \multicolumn{1}{c|}{25.39} & 9.94e-11 & \multicolumn{1}{c|}{109} & \multicolumn{1}{c|}{\textbf{19.29}} & 7.62e-11 & \multicolumn{1}{c|}{125} & \multicolumn{1}{c|}{32.31} & 8.04e-11 & \multicolumn{1}{c|}{129} & \multicolumn{1}{c|}{24.67} & 8.35e-11 \\ \hline
			\end{tabular}
		\end{table}
	\end{landscape}

\section*{Declarations}
\subsection*{Funding} This work was supported by the National Natural Science Foundation of China (Grant numbers 12171403 and 11771364), and the Fujian Provincial Natural Science Foundation of China (No. 2025J01031).
\subsection*{Conflict of Interest} The authors have no competing interests to declare that are relevant to the content of this article.	
\subsection*{Data Availability} The data that support the findings of this study are available from the corresponding author upon reasonable request.
\subsection*{Author Contributions}
Both authors have contributed equally to the work.

{\small
\bibliographystyle{abbrv}
%\bibliography{pcmrh}

}	
	
\end{document}